\newtheorem{lemm}{Lemma}
\newtheorem{theo}{Theorem}
\newtheorem{prop}{Proposition}
\theoremstyle{remark}
\newtheorem*{rema*}{Remark}
\newcommand{\R}{\mathbb{R}}
\newcommand{\Mbar}{\overline{M}}
\title{Inverse problems for the Bakry-\'Emery Laplacian on manifolds with boundary - uniqueness and non-uniqueness} 
\author{Jack Borthwick\thanks{\href{mailto:jack.borthwick@mcgill.ca}{jack.borthwick@mcgill.ca}}, Niky Kamran\thanks{\href{mailto:niky.kamran@mcgill.ca}{niky.kamran@mcgill.ca}}}
\affil{McGill University\\}
\date{March 26, 2025}
\begin{document}
\maketitle
{\let\thefootnote\relax\footnotetext{Keywords: Inverse problems, Bakry-\'Emery Laplacian, Dirichlet-to-Neumann map. }}
\begin{abstract}
We study the questions of uniqueness and non-uniqueness for a pair of closely related inverse problems for the Bakry-\'Emery Laplacian $-\Delta_{\mathcal E}$ on a smooth compact and oriented Riemannian manifold with boundary $(\overline{M},g)$, endowed with a volume form $\mathfrak{m}=e^{-V}\omega_g$. These consist in recovering the Taylor coefficients of metric $g$ and weight $V$ along the boundary of $\overline{M}$ from the knowledge of a pair of operators that can be viewed as geometrically natural Dirichlet-to-Neumann maps associated to $-\Delta_{\mathcal E}$. 
\end{abstract}

\section{Introduction}

We consider a manifold with boundary endowed with a smooth Bakry-\'Emery structure, that is a smooth compact and oriented Riemannian manifold with boundary $(\overline{M},g)$ of dimension $n\geq 3$, with interior $M$ and boundary $\partial M$, endowed with a volume form $\mathfrak{m}$ given by
 \[\mathfrak{m}=e^{-V}\omega_g\,,\]
where $\omega_g$ denotes the standard Riemannian volume form canonically induced on $\overline{M}$ by the metric $g$ and $V$ is a smooth function on $\overline{M}$. 

Following~\cite{Ambrosio_Gigli_Savare_2015}, we introduce the quadratic form $\mathcal{E}$ defined on $C_0^\infty(M)$ by 
\begin{equation}
    \label{eq:quadratic}
\mathcal{E}(u) = \int_M g(\textrm{d}u,\textrm{d}u) \mathfrak{m},\end{equation} and the associated\footnote{The assumption $u \in C_0^\infty(M)$ allows us to drop the boundary terms when performing integration by parts in the computation of the first variation of $\mathcal E$.} Bakry-Emery Laplacian $-\Delta_{\mathcal E}$\,, defined by
\begin{equation}\label{eq:operator_ambrosio}-\Delta_{\mathcal E}=-\Delta_g + g(\textrm{d}V,\cdot)\,. \end{equation}
Although the quadratic form $\mathcal E$ is defined on functions $u$ with compact support in the interior of $\Mbar$, the coefficients of $\Delta_{\mathcal E}$, depending only on the metric $g$, the weight $V$ and their derivatives, are well defined up to the boundary $\partial M$. 

The object of this article is the study of a pair of closely related inverse problems for the operator $-\Delta_{\mathcal E}$, both of which stem from geometrically natural definitions of operators that can be viewed as Dirichlet-to-Neumann (DN) maps associated to $-\Delta_{\mathcal E}$. 

Before defining these maps, we observe that from a geometric point of view, it is an intriguing choice to decouple the volume form of the manifold from the metric $g$ through the introduction of a weight $V$. It is therefore natural to ask if there is some geometric context in which such a decoupling may be canonical.

Our answer to this question is based on the observation~\cite{Gonzalez-Lopez_Kamran_Olver_1994} that any second-order differential operator can be rewritten as a Schrödinger operator:
\begin{equation} \mathcal{S}=-g^{ab}D^\omega_a\,D_b^\omega + U, \end{equation}
acting on sections of the rank $1$ vector bundle: \begin{equation}\label{def:G1} \mathscr{G}[1]=P\times_{\R} \R\,,\end{equation} associated to a principal bundle $\pi: P\simeq \Mbar\times \R \rightarrow \Mbar$ with group $(\R,+)$. The representation of $(\R,+)$ defining $\mathscr{G}[1]$ is simply: \[\mu \mapsto e^\mu\,,\]
and $P$ is equipped with a principal connection $\omega$ that induces a linear connection $D^\omega$ on $\mathscr{G}[1]$.
We can define, in a similar fashion, a line bundle $\mathscr{G}[w]$ using the representation $\mu \mapsto e^{w\mu}$ for any $w\in \R$. For our purposes, we observe that: \[ \mathscr{G}[-1]\simeq \mathscr{G}[1]^*, \quad \mathscr{G}[-2]=\mathscr{G}[1]^*\otimes\mathscr{G}[1]^*.\]  

It is easily seen that if $\mathcal{S}$, evaluated along some section $s: \Mbar \rightarrow P$, takes the form $-\Delta_\mathcal{E}$, then the principal connection must be flat, i.e. $d\omega =0$, and
\begin{equation}\label{eq:defU} U=-\frac{1}{2}\Delta_gV + \frac{1}{4}g(\textrm{d}V,\textrm{d}V)\,.\end{equation}
A consequence of this is that one can find a parallel metric $\zeta \in \Gamma \,\big(\mathscr{G}[-2]\big)$, determined up to a multiplicative constant, enabling us to introduce a quadratic form $\hat {\mathcal{E}}$ acting on sections $\phi\in\Gamma
\big(\mathscr{G}[1]\big)$ given by:
\[ \hat{\mathcal{E}}(\phi)=\int_M (g^{ab}D^\omega_a\phi \,D^\omega_b \phi  +U\phi^2) \,\zeta\,\omega_g\,. \]

When evaluated along this same section $s$ and after choosing appropriately the normalisation of $\zeta$, the quadratic form $\hat{\mathcal{E}}$ differs from $\mathcal{E}$ by the presence of a divergence term in the integrand, therefore leading to the same expression for the operator $-\Delta_{\mathcal E}$.

Although this provides a geometric framework in which the decoupling of the volume form $\mathfrak{m}$ from the metric arises naturally from the fact that the operator acts on sections of $\mathscr{G}[1]$, as opposed to (smooth) functions $f: M \to \R$, it introduces a gauge freedom that one might not consider as inherent to the original problem; consequently, we will also consider the problem without embedding it into a gauge theory. The two interpretations lead, of course, to different treatments of the inverse problem and distinct results.

The main question we wish to explore in this paper is what information about the metric $g$ and the measure $\mathfrak m$ (parametrised by $V$) can be extracted from the DN map, or more specifically, its full symbol when viewed as a pseudo-differential operator. 

The notion of DN map clearly depends on the type of object on which $-\Delta_{\mathcal{E}}$ acts. First of all, if $u$ is taken as a scalar function on $M$ as suggested by Eq.~\eqref{eq:quadratic}, then, based on the geometric data $(g,\mathfrak{m})$ of the problem, we might define the DN map to be:
\begin{equation}\label{eq:DNmap1} \Lambda^0 u_0 = \iota^*((\textrm{d}u)^\sharp \lrcorner \, \mathfrak{m}), \qquad u_0 \in C^\infty_0(\partial M),\end{equation}
where $\iota^*: \partial M \hookrightarrow \Mbar$ is the canonical inclusion map, $\star$ denotes the Hodge dual and $u$ solves the boundary value problem:
\begin{equation}\label{eq:DirichletPbDirect}\begin{cases} (-\Delta_g + g(dV,\cdot))u =0 & \textrm{on $M$}\,, \\
u = u_0 & \textrm{on $\partial M$}\,. \end{cases}\end{equation}

On the other hand, in the gauge-theoretic interpretation in which the operator is thought to act on sections $\phi\in\Gamma\big(\mathscr{G}[1]\big)$, it is natural to substitute the gauge covariant derivative $D^\omega \phi$ for $\textrm{d}u$ in the above formula.
The natural DN map will then be the operator:  \[ \Lambda^1: \Gamma\big (\iota^*\mathscr{G}[1]\big)\longrightarrow \Gamma \big(\iota^*\mathscr{G}[1]\otimes \Lambda^{n-1}\partial M\big),\]
defined by:
\begin{equation}
    \label{eq:DNMapGaugeNatural}
    \Lambda^1 \phi_0 = \iota^*\left((D^\omega\phi)^\sharp\lrcorner \,\omega_g\right)=\iota^*(\star D^\omega \phi)\,,
\end{equation}
and $\phi$ solves:
\begin{equation}\label{eq:DirichletGauge}\begin{cases} (-g^{ab}D_a^\omega D_b^\omega + U)\phi =0 & \textrm{on $M$}, \\
\phi = \phi_0 & \textrm{on $\partial M$}. \end{cases}\end{equation}
This geometric formulation defines the DN map as an object which can be tested against sections of $\iota^*\mathscr{G}[-1]$.
The advantage of this is that, exploiting the section $\zeta$, $\Lambda^1$ can be identified with an operator $\Lambda^1_{\zeta}$ given by
\begin{equation}\label{eq:DNmap2} \Lambda^1_{\zeta} \phi_0=\iota^*(D^\omega \phi)^\sharp \lrcorner \, \zeta\omega_g, \qquad \phi_0 \in \iota^*\Gamma\big(\mathscr{G}[1]\big),\end{equation}
and, expressed along an appropriate section $s$ (which leads to a distinguished section $e_s$ of $\mathscr{G}[1]$), we have: 
\[\zeta\,\omega_g=\mathfrak{m}\, e_s^{-2}.\]

Our approach to the inverse problems stemming from the above definitions is inspired by the classical computation due to Lee and Uhlmann~\cite{Lee_Uhlmann_1989} of the symbol of the DN-map for the Laplace-Beltrami operator $-\Delta_g$. This computation will be performed in boundary normal coordinates $x=(r,y^\mu)$ of the metric $g$, for which we recall the metric is of the form:
\[ g= \textrm{d}r^2 +g_{\alpha\beta}\textrm{d}y^\alpha\textrm{d}y^\beta\,. \]
We point out that these coordinates are naturally defined on a collar neighbourhood $\mathscr{C}= \psi([0,a)\times \partial M)$ of the boundary
 ($\psi$ is an embedding of $[0,a)\times \partial M$ into $\Mbar$ that restricts to the natural identification $\{0\} \times \partial M \rightarrow \partial M$).
 
 Throughout, Greek indices $\alpha, \beta, \dots \in \{1,\dots, n-1\}$ will be used to indicate components in the tangential directions to the boundary. When necessary, Latin indices $i,j,k..$ will range through $\{0,\dots,n-1\}$ with the understanding that $x^0=r, x^\alpha=y^\alpha.$

Following~\cite{Lee_Uhlmann_1989, Treves_1980}, we will also write:
\[ D_{x^j}=D_j=-i\partial_{x_j}, \qquad D_y=(D_{y^1}, \dots, D_{y^n})\,. \]

With these conventions and when adapted to our situation, the factorisation will take either of the following forms:
\begin{itemize}
\item If we embed the problem into a gauge theory:
\[ -g^{ij}(\partial_i - A_i)(\partial_j - A_j) +U \sim  (D_r +i A_r + iE(x)-iB(x,D_{y}))(D_r+iA_r+iB(x,D_{y}))\,,\]
where: \[ E= -\frac{1}{2}\,\partial_r \ln \delta\,.\]
This factorisation should be understood to be carried out \emph{after} writing the operator $\mathcal{S}$ along a section; $B(x,D_{y})$ will depend on this choice.
 \item Otherwise:
 \[ -\Delta_g+g(\textrm{d}V,\textrm{d})\sim (D_r+i \tilde{E} - iC(x,D_{y}))(D_r+iC(x,D_{y})), \]
 with: 
 \[ \tilde{E}=-\frac{1}{2}\partial_r\ln \delta + \partial_r V. \]
 \end{itemize}
In accordance with the point of view developed in~\cite[Chapter III]{Treves_1980}, it should be understood that $B(x,D_{y})$ and $C(x,D_{y})$ are families of pseudo-differential operators on $\partial M$, each of order at most $1$, that depend smoothly on the parameter $r$. This is consistent with the fact, pointed out previously, that we work in a collar neighbourhood $\mathscr{C}\simeq [0,a)\times \partial M$ of the boundary. 
Finally, in both cases the symbol $\sim$ is used to indicate equality up to addition of smoothing operators $\mathcal{E}'(\partial M) \rightarrow \mathcal{C}^\infty(\partial M)$ depending smoothly on $r$.

It is interesting to observe the structural differences in the dependence of $V$ in each case. These have an effect on how the Lee-Uhlmann procedure can be used to extract information about $V$ and $g$ and are at the heart of the differences we observe in our results.

The first problem we consider is the uniqueness of the couple $(g,V)$. In the computation of the symbol of either of the DN maps that we consider, the coupling between $g$ and $V$  turns out to be intricate. One of the motivations for introducing a gauge invariance was in fact to try to decouple this by a suitable choice of gauge, however, we were only able to extract the following:

\begin{prop}\label{prop:general_problem}
The symbol of the Dirichlet-to-Neumann map $\Lambda^1$ given by~\eqref{eq:DNMapGaugeNatural}-- or in fact, $\Lambda^1_\zeta$, see Eq. \eqref{eq:DNmap2}-- determines the boundary values of:
\begin{itemize}
    \item The metric components $g_{\alpha\beta}$ and their first radial derivatives $\partial_r g_{\alpha \beta}$,
    \item The weight $V$, up to an additive constant.
\end{itemize}  
\end{prop}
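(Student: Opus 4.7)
The plan is to apply the Lee--Uhlmann factorization strategy of~\cite{Lee_Uhlmann_1989} in boundary normal coordinates $(r, y^\alpha)$ and read off, order by order in $\xi$, the symbol of the pseudo-differential operator $B(x, D_y)$ appearing in the gauge-theoretic factorization
\[
-g^{ij}(\partial_i - A_i)(\partial_j - A_j) + U \;\sim\; (D_r + iA_r + iE - iB)(D_r + iA_r + iB).
\]
A standard parametrix construction shows that, modulo smoothing, the solution $\phi$ of~\eqref{eq:DirichletGauge} satisfies $(D_r + iA_r + iB)\phi \equiv 0$ near $\partial M$, so the full symbol of $\Lambda^1_\zeta$ is equivalent (modulo smoothing) to that of $B(0, y, D_y)$ together with the boundary values of $A_r$ and $E$.

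Writing the asymptotic expansion $B \sim \sum_{j \le 1} B_j$ with $B_j$ positively homogeneous of degree $j$ in $\xi$, substituting in the factorization and matching terms of decreasing homogeneity in $\xi$ yields a cascade of equations. At the top order ($\xi^2$), the algebraic relation $B_1^2 = g^{\alpha\beta}\xi_\alpha \xi_\beta$ determines $B_1(0,y,\xi) = \sqrt{g^{\alpha\beta}(0,y)\xi_\alpha\xi_\beta}$, whose polarization recovers $g_{\alpha\beta}|_{\partial M}$.

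At subprincipal order, the comparison yields a linear equation of the schematic form
\[
2\,B_1 B_0 \;=\; -i\,\partial_r B_1 \;-\; 2\, g^{\alpha\beta}A_\alpha \xi_\beta \;+\; \Phi\bigl(B_1,\partial_y B_1, g^{\alpha\beta}\bigr),
\]
where $\Phi$ is built only from already-determined quantities. Dividing by $2B_1$ decomposes $B_0$ as a sum of two pieces with distinct algebraic structure in $\xi$: a purely rational piece with denominator $g^{\alpha\beta}\xi_\alpha \xi_\beta$ carrying the coefficient $\partial_r g^{\alpha\beta}|_{\partial M}$, and a piece with the radical $\sqrt{g^{\alpha\beta}\xi_\alpha\xi_\beta}$ in the denominator, linear in $A_\alpha|_{\partial M}$. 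These can therefore be isolated by inspection, separately yielding $\partial_r g_{\alpha\beta}|_{\partial M}$ and the tangential components $A_\alpha|_{\partial M}$. Since flatness of $\omega$ forces $A = \tfrac12 dV$ in any chosen section, this gives $\partial_\alpha V|_{\partial M}$, and integrating along $\partial M$ recovers $V|_{\partial M}$ up to an additive constant---which is precisely the intrinsic ambiguity of the weight in $-\Delta_\mathcal{E}$ and matches the residual gauge freedom $s \mapsto e^c s$.

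The main obstacle will be this algebraic disentanglement at subprincipal order: one must verify that the rational and radical contributions to $B_0$ really do decouple, and check that the auxiliary boundary data $A_r|_{\partial M}$ and $E|_{\partial M}$ appearing in the symbol of $\Lambda^1_\zeta$ do not pull higher radial Taylor coefficients of $g$ or $V$ into these low-order equations. This controlled structure is exactly what limits the recoverable information to the three data items listed in the proposition.
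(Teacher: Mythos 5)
Your order-by-order analysis of the factorised operator $B$ follows the paper's setup, but the proposal misses the central difficulty of this particular proposition: the symbol of $\Lambda^1_\zeta$ is \emph{not} the symbol of $B$ alone. Written in boundary normal coordinates along a gauge, $\Lambda^1_\zeta\phi_0$ is the normal covariant derivative contracted into $\zeta\omega_g$, so the operator whose symbol you actually possess is $e^{-V}\sqrt{\delta}\,B$ in the Bakry--\'Emery gauge $s$ (where $A=\tfrac12\mathrm{d}V$), respectively $\sqrt{\delta}\,B$ in the gauge $\sigma=\zeta^{1/2}$ in which the connection form vanishes. Hence the principal symbol gives $e^{-V}\sqrt{\delta}\,\lVert\xi'\rVert$ (or $\sqrt{\delta}\,\lVert\xi'\rVert$), not $\lVert\xi'\rVert$, so ``polarization of $B_1$'' does not hand you $g_{\alpha\beta}|_{\partial M}$, and you cannot isolate $B_0$ either without first knowing the density prefactor: as written, your top-order and subprincipal steps are circular. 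This prefactor is exactly the mechanism coupling $g$ and $V$; for the scalar map $\Lambda^0$ the analogous factor $e^{-V}\sqrt{\delta}$ cannot be split and even the boundary values are not determined (Section~\ref{3.2}). The paper's proof resolves it by comparing the two distinguished gauges: in the gauge $\sigma$ the prefactor is $\sqrt{\delta}$ alone, so squaring the principal symbol yields the form $\delta\,g^{\alpha\beta}$, whose determinant is $\delta^{n-2}$, giving $\delta$ and hence $g^{\alpha\beta}|_{\partial M}$; returning to the gauge $s$, the principal symbol then yields $e^{-V}|_{\partial M}$, i.e.\ $V$ up to the additive constant; only at that point are the boundary values of all the $b_j$ accessible, and the Lee--Uhlmann step at order zero, carried out in the gauge $\sigma$ where $V$ is absent from Eq.~\eqref{eq:Order0}, produces $\partial_r g^{\alpha\beta}$. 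Your proposal never invokes $\zeta$, the parallel gauge, or the density factor, so it bypasses precisely the step that distinguishes $\Lambda^1_\zeta$ from $\Lambda^0$.

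A secondary issue concerns your recovery of $V$: you extract $A_\alpha|_{\partial M}=\tfrac12\partial_\alpha V|_{\partial M}$ from $B_0$ and integrate along $\partial M$. Even granting access to $B_0$, this determines $V$ only up to a constant on \emph{each connected component} of $\partial M$, which is weaker than the statement; the paper instead reads $e^{-V}|_{\partial M}$ off the principal symbol in the gauge $s$, fixing $V$ up to a single global constant tied to the normalisation of $\zeta$. The algebraic disentanglement of the two pieces of $B_0$ that you flag as the main obstacle is, by contrast, unproblematic (it is an even/odd, i.e.\ real/imaginary, split in $\xi'$ of the same kind the paper uses in the proofs of Proposition~\ref{prop:general_problem} and Theorem~\ref{thm3}); the real work lies upstream, in obtaining $g^{\alpha\beta}|_{\partial M}$, $\delta|_{\partial M}$ and $e^{-V}|_{\partial M}$ from the gauge-dependent expressions of one and the same map $\Lambda^1_\zeta$.
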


Proposition~\ref{prop:general_problem} does not hold for the DN map $\Lambda^0$ in Eq.~\eqref{eq:DNmap1}, essentially because the weight $V$ is present in $\tilde{E}$: this prevents us from even determining $\partial_r g_{\alpha\beta}$.
We will discuss this point further in Section~\ref{3.2}.

One can also question the uniqueness of one element of the pair $(g,V)$, assuming the other known; it is for these problems that the method is the most fruitful.

First, without much surprise, if the weight $V$ is known then, in either of our cases, the Lee-Uhlmann method (only with very minor computational differences) yields uniqueness of the metric $g$:
\begin{theo}\label{thm1}
If the weight $V$ is known, then one can determine the radial Taylor coefficients of the metric $g$ in boundary normal coordinates from either the symbol of the Dirichlet-to-Neumann map $\Lambda^1$ given by Eq. \eqref{eq:DNMapGaugeNatural} or the $\Lambda^0$ given by Eq.~\eqref{eq:DNmap1}.
\end{theo}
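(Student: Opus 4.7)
\medskip
\noindent\textbf{Proof sketch.} The plan is to run the pseudodifferential factorisation of Lee-Uhlmann~\cite{Lee_Uhlmann_1989} in the two settings spelled out in the introduction, working in boundary normal coordinates on the collar $\mathscr{C}\simeq[0,a)\times\partial M$, and carrying out the argument in parallel for $\Lambda^0$ with the pair $(C,\tilde E)$ and for $\Lambda^1$ with $(B,E)$. Throughout, $g = dr^2 + g_{\alpha\beta}\,dy^\alpha dy^\beta$ and $\delta = \sqrt{\det g_{\alpha\beta}}$.

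The first step is to read the symbol of $C$ (or $B$) at $r=0$ off the symbol of the DN map. From the factorisation, if $u$ solves the Dirichlet problem~\eqref{eq:DirichletPbDirect} then $\partial_r u|_{r=0}\equiv -C(0,y,D_y)u_0$ modulo a smoothing operator on $\partial M$. A direct evaluation of~\eqref{eq:DNmap1} in boundary normal coordinates gives
\[
\Lambda^0 u_0 = e^{-V(0,y)}\,\delta(0,y)\,\partial_r u|_{r=0}\, dy^1\wedge\cdots\wedge dy^{n-1},
\]
so, since $V$ is known by assumption and $\delta(0,y)$ is accessible from the principal symbol, the full symbol of $\Lambda^0$ determines that of $C$ at $r=0$ modulo a symbol of order $-\infty$. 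The same reasoning, based on either~\eqref{eq:DNMapGaugeNatural} or~\eqref{eq:DNmap2}, shows that the full symbol of $\Lambda^1$ determines that of $B$ at $r=0$.

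The second step is the Riccati recursion. Expanding $c\sim\sum_{k\geq 0}c_{1-k}(x,\xi')$ in homogeneous components, substituting into the factorisation and matching orders via the asymptotic composition formula yields, at each homogeneity degree $1-k$, an identity of the form
\[
2\,c_1\,c_{1-k} \;=\; -\partial_r c_{2-k} + F_k\big(c_1,\ldots,c_{2-k};\, \tilde E;\, V;\, \{\partial_r^j g_{\alpha\beta}\}_{j<k}\big),
\]
with $F_k$ a universal differential-algebraic expression. The principal identity $c_1^2 = g^{\alpha\beta}\xi_\alpha\xi_\beta$ recovers $g_{\alpha\beta}|_{\partial M}$. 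Inductively at order $1-k$, the left-hand side is known from Step~1 and the only new unknown on the right is $\partial_r^k g_{\alpha\beta}|_{r=0}$, which enters linearly through $\partial_r c_{2-k}|_{r=0}$; inverting this linear dependence yields the $k$-th radial Taylor coefficient of $g$ at the boundary.

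The main technical obstacle is verifying that at each order the coefficient of the newly-introduced derivative $\partial_r^k g_{\alpha\beta}|_{r=0}$ is algebraically invertible as a function of $\xi'$; this is precisely what drives the classical Lee-Uhlmann computation and transfers here with only very minor bookkeeping modifications. The sole genuinely new feature is the presence of $V$-dependent terms in $\tilde E = -\tfrac{1}{2}\partial_r\ln\delta + \partial_r V$ (and their absence from $E$): because $V$ is known by hypothesis, these contribute only already-known quantities to $F_k$ and do not disturb the invertibility structure of the recursion. The $\Lambda^1$ case is thus a direct transcription of the classical argument, while the $\Lambda^0$ case requires only carrying the known $V$-terms through at each step.
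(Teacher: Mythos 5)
Your sketch is correct and is essentially the argument the paper has in mind: the paper omits the proof of Theorem~\ref{thm1} precisely because it is the Lee--Uhlmann factorisation argument with minor bookkeeping, which is what you carry out --- recover $\delta$ and $g^{\alpha\beta}$ at the boundary from the principal symbol (using the known $V$ to strip the $e^{-V}$ factor), then run the Riccati-type recursion in which each new radial derivative $\partial_r^k g_{\alpha\beta}|_{\partial M}$ enters linearly through a trace-modified quadratic form in $\xi'$ that is algebraically invertible. The $V$-dependent terms in $\tilde E$ (respectively their absence in $E$) are indeed known data and do not disturb the recursion, exactly as you say.
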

\noindent The proof of this theorem will therefore be omitted.

The more interesting problem is when $g$ is assumed known and the goal is to prove uniqueness of the weight. Here, there is a surprisingly stark difference between the two proposed interpretations of the operator $-\Delta_\mathcal{E}$. This is the content of:
\begin{theo}\label{thm2}
Assume the metric $g$ known, then the symbol of the Dirichlet-to-Neumann map $\Lambda^0$ defined in Eq.~\eqref{eq:DNmap1} determines uniquely the radial Taylor coefficients of the weight $V$.
\end{theo}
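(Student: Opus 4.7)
The plan is to adapt the Lee-Uhlmann factorisation argument to the non-gauge factorisation
\[ -\Delta_g + g(\mathrm{d}V,\mathrm{d})\sim (D_r + i\tilde{E} - iC)(D_r + iC), \]
exploiting the crucial feature that, in this non-gauge setting, the weight $V$ appears directly and linearly in the scalar coefficient $\tilde{E}=-\tfrac12\partial_r\ln\det g_{\alpha\beta}+\partial_r V$. First I would express $\Lambda^0$ in boundary normal coordinates: writing $\mathfrak{m}=e^{-V}\sqrt{\det g_{\alpha\beta}}\,\mathrm{d}r\wedge\mathrm{d}y^1\wedge\cdots\wedge\mathrm{d}y^{n-1}$ and contracting with $(\mathrm{d}u)^\sharp = \partial_r u\,\partial_r + g^{\alpha\beta}\partial_\alpha u\,\partial_\beta$, the pullback to $\{r=0\}$ kills every term containing $\mathrm{d}r$, so $\Lambda^0 u_0 = e^{-V|_{\partial M}}\sqrt{\det g_{\alpha\beta}|_{\partial M}}\,\partial_r u|_{r=0}\,\mathrm{d}y^1\wedge\cdots\wedge\mathrm{d}y^{n-1}$. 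The factorisation applied to the unique interior-regular solution $u$ of~\eqref{eq:DirichletPbDirect} gives $(D_r+iC)u\sim 0$ near $r=0$, i.e.\ $\partial_r u|_{r=0}=(C|_{r=0})u_0$ modulo a smoothing operator. Thus the full symbol of $\Lambda^0$, viewed as a $\Psi$DO on $\partial M$, equals the scalar factor $e^{-V|_{\partial M}}\sqrt{\det g_{\alpha\beta}|_{\partial M}}$ times the full symbol of $C|_{r=0}$.

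Extracting $V|_{\partial M}$ follows immediately from the principal symbol: the eikonal-type relation $c_1^2=g^{\alpha\beta}\eta_\alpha\eta_\beta$ gives $c_1=|\eta|_g$, expressible entirely in terms of the known $g$, so the top-order symbol of $\Lambda^0$ is $e^{-V|_{\partial M}}\sqrt{\det g_{\alpha\beta}|_{\partial M}}\,|\eta|_g$, and dividing out the known factors reads off $V|_{\partial M}$. With this scalar prefactor now fully determined, the successive lower-order symbols $c_j|_{r=0}$ for $j=0,-1,-2,\dots$ can in turn be extracted order by order from the symbol of $\Lambda^0$ without any further knowledge of $V$.

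The core step is the recovery of the radial Taylor coefficients $V_k:=\partial_r^k V|_{\partial M}$ from the Riccati-type operator identity
\[ C^2 - \tilde{E}\,C + \partial_r C \sim -g^{\alpha\beta}D_\alpha D_\beta + (\text{lower-order tangential terms}), \]
obtained by equating the product of the two factors to $-\Delta_g+g(\mathrm{d}V,\mathrm{d})$ modulo smoothing. At degree $1$ in $\eta$ and evaluated at $r=0$, every term in the corresponding symbol identity is expressible in terms of $g$ (and its radial derivatives, which are known), of the boundary values of $V$ and its tangential derivatives, and of $c_1|_{r=0},c_0|_{r=0}$, except the contribution $(\partial_r V|_{r=0})\,c_1$ coming through $\tilde{E}|_{r=0}$; ellipticity $c_1=|\eta|_g\neq 0$ then yields $V_1$. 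For $k\geq 2$ I would induct on $k$: assuming $V_0,\dots,V_{k-1}$ determined, differentiate the Riccati identity $k-1$ times in $r$ and evaluate at $r=0$. Reading the degree-$1$ component of the resulting symbol identity isolates a term $V_k\cdot c_1$ (via the $\partial_r^k V$ contribution to $\partial_r^{k-1}\tilde{E}$), while all remaining summands depend only on $g$, on the already known $V_j$ with $j<k$, on the $c_j|_{r=0}$ extracted from $\Lambda^0$, and on radial derivatives $\partial_r^\ell c_j|_{r=0}$ with $\ell<k$ that have been computed at earlier stages by reading the lower symbol orders of the same differentiated identity.

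The main obstacle is the combinatorial bookkeeping needed to verify that at each joint step of the two nested inductions (on the symbol order and on the order of the radial derivative) every summand in the identity is already known, save for the single unknown $V_k$ sitting against the elliptic factor $c_1$. What ultimately makes the construction go through, in sharp contrast to the gauge-invariant situation governed by Proposition~\ref{prop:general_problem}, is the structural fact that $V$ enters $\tilde{E}$ linearly through its radial derivative and is coupled to the invertible principal symbol $c_1=|\eta|_g$, giving an explicit, solvable scalar equation for each $V_k$ in turn.
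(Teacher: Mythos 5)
Your proposal is correct and follows essentially the same route as the paper: the same factorisation $(D_r+i\tilde{E}-iC)(D_r+iC)$, the same lemma identifying the full symbol of $\Lambda^0$ with $e^{-V}\sqrt{\delta}$ times the symbol of $C$ at the boundary (so that $V|_{\partial M}$ drops out of the principal symbol, since $g$ is known), and then the extraction of the successive radial derivatives of $V$ from the symbol hierarchy, exploiting that $\partial_r V$ enters linearly through $\tilde{E}$ against the elliptic factor $c_1$. The only difference is bookkeeping: where you run a double induction (differentiating the Riccati identity $k-1$ times in $r$ and reconstructing the boundary values of $\partial_r^{\ell}c_j$ from lower-degree components), the paper packages this into a single induction on the symbol order, proving $c_j=(-1)^j(2c_1)^{j+1}\partial_r^{-j+1}V+\tilde{R}_j$ with $\tilde{R}_j$ depending only on $g$ and lower-order radial (and tangential) derivatives of $V$, so that $\partial_r^{k}V$ is read off directly from the boundary value of $c_{-k+1}$.
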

\noindent and:
\begin{theo}\label{thm3}
If the metric $g$ is known, then the symbol of the Dirichlet-to-Neumann map $\Lambda^1$, Eq. \eqref{eq:DNMapGaugeNatural}, or equivalently $\Lambda^1_\zeta$, Eq. \eqref{eq:DNmap2}, determines the boundary values of either:
\begin{itemize}
\item the radial Taylor coefficients of $V$, given the value of $\partial_r V$,
\item $\partial_r V$ and the radial Taylor coefficients of $V$ starting at order $3$, if instead the value of $\partial_r^2V$ is prescribed. In this case, $\partial_r V$ is determined by solving a second-order polynomial equation (which may have $2$ solutions), and the higher-order derivatives are completely determined by the choice of one of these roots.
\end{itemize}
\end{theo}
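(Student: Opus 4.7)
My approach combines the Lee-Uhlmann symbolic calculus with the explicit form of the potential $U = -\tfrac{1}{2}\Delta_g V + \tfrac{1}{4}g(dV,dV)$ appearing in the gauge-theoretic expression $\mathcal{S} = -g^{ab}D_a^\omega D_b^\omega + U$ of $-\Delta_{\mathcal{E}}$. The key observation is that the gauge-covariant DN map $\Lambda^1$ only sees $V$ through the gauge-invariant combination $U$; inverting the relation $U = U[V]$ in the radial direction therefore amounts to solving a second-order differential expression, for which two pieces of Cauchy data along $\partial M$ are required---this is the structural origin of the dichotomy in the statement. The first step of the plan is thus to read off $\{\partial_r^k U|_{\partial M}\}_{k\geq 0}$ from the symbol. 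Starting from the factorization
\begin{equation*}
-g^{ij}(\partial_i - A_i)(\partial_j - A_j) + U \sim (D_r + iA_r + iE - iB)(D_r + iA_r + iB),
\end{equation*}
I would run the Riccati-type recursion on the homogeneous components of the symbol of $B$, as in Lee-Uhlmann; since the principal connection $\omega$ is flat, one may work in a gauge where $A \equiv 0$, reducing the analysis to the Schrödinger-type operator $-\Delta_g + U$. Inverting the recursion level by level, together with the hypothesis that $g$ is known, would then yield the full radial jet of $U$ at $\partial M$.

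Second, I would unfold $U$ in boundary normal coordinates. Using $g^{rr} = 1$ and $g^{r\alpha} = 0$, one computes
\begin{equation*}
U = -\tfrac{1}{2}\partial_r^2 V - \tfrac{1}{4}(\partial_r \ln \det g_{\alpha\beta})\,\partial_r V - \tfrac{1}{2}\Delta_{g_r} V + \tfrac{1}{4}(\partial_r V)^2 + \tfrac{1}{4}g^{\alpha\beta}\partial_\alpha V \,\partial_\beta V,
\end{equation*}
where $g_r$ denotes the induced metric on $\{r = \mathrm{const}\}$. Proposition~\ref{prop:general_problem} already fixes $V|_{\partial M}$ up to an additive constant, so all its tangential derivatives at the boundary are determined. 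Differentiating this identity $k$ times in $r$ and evaluating at $r = 0$ then produces an algebraic relation between $\partial_r^{k+2}V|_{\partial M}$ and $\partial_r^k U|_{\partial M}$, all other contributions being previously known: the only nonlinear coupling is the term $(\partial_r V)^2$, whose $k$-th derivative, by Leibniz, only involves products $\partial_r^{j+1}V \cdot \partial_r^{k-j+1}V$ with $1 \leq j+1, k-j+1 \leq k+1$, strictly below the top order $k+2$.

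In the first scenario, where $\partial_r V|_{\partial M}$ is prescribed, the $k = 0$ relation is affine in $\partial_r^2 V|_{\partial M}$ and determines it; the relations for $k \geq 1$ are then likewise affine in $\partial_r^{k+2}V|_{\partial M}$ and fix all higher radial Taylor coefficients inductively. In the second scenario, where $\partial_r^2 V|_{\partial M}$ is prescribed instead, the $k = 0$ relation becomes a genuine quadratic in $\partial_r V|_{\partial M}$,
\begin{equation*}
\tfrac{1}{4}(\partial_r V|_{\partial M})^2 - \tfrac{1}{4}(\partial_r \ln \det g_{\alpha\beta})|_{\partial M}\,\partial_r V|_{\partial M} = U|_{\partial M} + \tfrac{1}{2}\partial_r^2 V|_{\partial M} + \tfrac{1}{2}\Delta_{g_0}V|_{\partial M} - \tfrac{1}{4}g^{\alpha\beta}|_{\partial M}\partial_\alpha V|_{\partial M}\,\partial_\beta V|_{\partial M},
\end{equation*}
with at most two roots. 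Choosing a root fixes $\partial_r V|_{\partial M}$; combined with the prescribed $\partial_r^2 V|_{\partial M}$, this renders all subsequent equations ($k \geq 1$) affine in $\partial_r^{k+2}V|_{\partial M}$, so the remaining radial Taylor coefficients are uniquely determined along each branch.

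The main obstacle I anticipate is the careful execution of the first step: isolating, within the Riccati recursion, precisely the gauge-invariant content and verifying that it agrees with the radial jet of $U$ and nothing more. Concretely, one needs to track how a change of flat trivialisation transforms both the connection $A$ and the symbol of $B$, and to check that the only invariants surviving at each homogeneity level are the coefficients of $g$ (already known) and of $U$. Once this is in place, the remaining steps reduce to the elementary bookkeeping outlined above, and the non-uniqueness in the second scenario is a transparent consequence of the quadratic structure of the $(\partial_r V)^2$ contribution to $U$.
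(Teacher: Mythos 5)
Your proposal is correct in substance and reaches the stated dichotomy for the right structural reason, but it organizes the argument differently from the paper. The paper's proof of Theorem~\ref{thm3} never isolates the jet of $U$: it stays in the gauge $A_b=\tfrac12\nabla_b V$, recovers $V|_{\partial M}$ (up to a constant) as in Proposition~\ref{prop:general_problem}, and then reads the radial derivatives of $V$ directly off the symbol coefficients of $B$ -- the coupling of $\partial_r V$ and $\partial_r^2 V$ appears in a single equation at the level $j=-1$ (through $q_0=-\tfrac12\delta^{-1/2}\partial_r(\delta^{1/2}\partial_r V)+\tfrac14(\partial_r V)^2$), and an induction gives $b_j=\tfrac12(-2b_1)^j\partial_r^{-j+1}V+R_j$ for $j\le -2$, so the higher coefficients follow once one radial datum is fixed. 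You instead pass to the flat gauge, recover the full boundary jet of the gauge-invariant potential $U$ (the classical Lee--Uhlmann boundary determination of a potential when $g$ is known), and then invert $U=U[V]$ jet-by-jet in boundary normal coordinates; the quadratic term $\tfrac14(\partial_r V)^2$ then produces exactly the two-root ambiguity when $\partial_r^2 V$ rather than $\partial_r V$ is prescribed, and all relations for $k\ge 1$ are affine in the top derivative. Your route makes the origin of the non-uniqueness more transparent (in the flat gauge the map depends on $V$ only through $U$, and recovering $V$ from $U$ is a second-order radial problem requiring Cauchy data), while the paper's route avoids the separate inversion step and yields explicit formulas relating $b_j$ to $\partial_r^{-j+1}V$. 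Two caveats: first, your opening claim that $\Lambda^1$ ``only sees $V$ through $U$'' is true only of its expression in the flat gauge; the recovery of $V|_{\partial M}$, which you genuinely need for the tangential terms $\Delta_{g_0}V$ and $g^{\alpha\beta}\partial_\alpha V\partial_\beta V$ in your $k=0$ relation, uses the expression of $\Lambda^1_\zeta$ in the gauge $A_b=\tfrac12\nabla_b V$ (this is what Proposition~\ref{prop:general_problem} does, and you do invoke it, so the proof stands, but the phrasing should be corrected). Second, your first step -- that the recursion in the flat gauge determines all $\partial_r^k U|_{\partial M}$ -- is asserted rather than carried out; it does hold (with $g$ known, $b_{-1-k}$ contains $\partial_r^k U$ with nonvanishing coefficient and only lower-order radial derivatives of $U$ in the remainder, exactly parallel to the induction in the paper's proofs of Theorems~\ref{thm2} and~\ref{thm3}), so this is a gap of execution rather than of substance, but a complete write-up should include that induction.
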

If we do not consider that the problem is embedded in a gauge theory then, in the real-analytic category, Theorem~\ref{thm2} shows that $V$ is uniquely determined by the DN map. This fails in the gauge-theoretic setting, where it appears that two weights might exist that will be very different. This is particularly surprising compared to, for instance, the results of Valero~\cite{Valero_2024} for spinors, where the more sophisticated algebraic structure of spinor fields provides an additional rigidity that is absent in our case.

It is interesting to observe that Theorems~\ref{thm2} and~\ref{thm3} can be slightly improved, namely the conclusions will be the same if we instead assume the knowledge of the Riemannian volume form $\omega_g$ in a neighbourhood of the boundary; this will be the substance of Theorems~\ref{thm4} and~\ref{thm5} in Sections~\ref{Section2.3} and \ref{3.2}. One might remark, however, that this assumption is slightly unnatural from the perspective of inverse problems.

We conclude our introduction by mentioning the important results of Kurylev, Oksanen and Paternain~\cite{Kurylev_Oksanen_Paternain_2018} on the reconstruction of a Riemannian manifold and a Hermitian vector bundle endowed with a compatible connection from the knowledge of the hyperbolic DN map associated with the wave equation arising from the connection Laplacian. The setting of our paper is at the same time different and simpler since we consider elliptic DN maps in the context of a rank $1$ real principal bundle. Furthermore, our results are limited to the recovery of the Taylor coefficients of the metric $g$ and weight $V$ along the boundary.

\section{Expressing $-\Delta_{\mathcal E}$ as a gauge-invariant Schr\"odinger operator}\label{Section2}
Our goal in this section is twofold. First, we explain in detail in Section~\ref{2.1} how the operator $-\Delta_{\mathcal E}$ can be interpreted in a gauge-theoretic context. Second, we obtain in Section~\ref{sec:gauge_computations} a factorisation of this operator in an arbitrary gauge, up to families of smoothing operators, enabling us to compute the full symbol of the DN map. This will in turn lead us, in Section~\ref{Section2.3}, to the proof of Proposition~\ref{prop:general_problem} and Theorem~\ref{thm3}.
\subsection{General considerations}\label{2.1}
Recall from \cite{Gonzalez-Lopez_Kamran_Olver_1994,Cotton_1900} that any second order differential operator $T$ given in local coordinates by:
\begin{equation} \label{eq:general_form}T= g^{ij}\frac{\partial^2}{\partial x^i \partial x^j} + b^i\frac{\partial}{\partial x^i}+c\,, \end{equation}
where $g^{ij},b^i,c$ are smooth real-valued functions on an open subset $\mathcal{O}\subset \Mbar$ and $(g^{ij})$ is assumed to define an everywhere  positive definite section of the bundle $S^2(\Mbar)$ of $(0,2)$ tensors, can be rewritten in the form
\begin{equation}\label{eq:general_forme2} T= g^{ij}(\nabla_i - A_i)(\nabla_i -A_j) + W\,, \end{equation}
where
\[ \begin{cases} A^i=g^{ij}A_j=-\frac{1}{2}b^i +\frac{1}{2}\delta^{-\frac{1}{2}}\partial_j(\delta^{\frac{1}{2}}g^{ij})\,, \\ W= c-A_iA^i +\delta^{-\frac{1}{2}}\partial_i(\delta^\frac{1}{2}A^i)\,, \\ \delta=\det(g_{ij}\,,) \end{cases}\]
and $\nabla$ is the Levi-Civita connection of $g$.
Thus, any operator of this type gives rise geometrically to a $(\R,+)$ gauge theory on the Riemannian manifold $(\Mbar,g)$, written in a gauge for which the local connection form is given by $-A_i \textrm{d}x^i.$ 

Indeed, let $\pi: P\rightarrow \Mbar$ be a principal $(\R,+)$-bundle over a Riemannian manifold (with boundary) $(\Mbar,g)$ and assume that $P$ is equipped with a principal connection $\omega$. Let $\mathscr{G}[1]$ be defined by equation~\eqref{def:G1}.

Any (local) section $s$ of $P$ determines a non-vanishing (positive\footnote{The bundle $\mathscr{G}[1]$ is canonically oriented.}) section $\tau_s$ of $\mathscr{G}[1]$, the induced linear connection $D^\omega$ on $\mathscr{G}$, this is locally determined by:
\[ D_b^\omega \tau_s = -A_b \tau_s,\]
where $A=-s^*\omega$.

Coupling $D^\omega$ with the Levi-Civita connection $\nabla$ on tensor products by the Leibniz rule, we can form a Schrödinger type operator:
\begin{equation} \label{eq:SchrodingerGauge}\mathcal{S}=-g^{ab}\,D^\omega_aD_b^\omega + U, \end{equation}
which locally acts on a section $\phi= u \tau_s$ by:
\[(-g^{ab}\,\nabla^\omega_a\,\nabla_b^\omega +U)\phi = (-g^{ab}(\nabla_a - A_a)(\nabla_b - A_b)u+Uu)\tau_s. \]

Observing that all $(\R,+)$-principal bundles are trivial and have a distinguished vertical vector field $n^a$ that generates the group action, we have the following characterisation of gauge theories that include the operator~\eqref{eq:operator_ambrosio}.
\begin{lemm}\label{lemm:characterisation}
Let $\pi: P \rightarrow \Mbar$ be a principal $\R$-bundle over a Riemannian manifold $(\Mbar,g)$ equipped with a principal connection $\omega$ and denote by $n^a$ the canonical vector field that generates the group action. Then, there is a section $s: \Mbar \rightarrow P$ along which the Schrödinger operator~\eqref{eq:SchrodingerGauge} takes the form given in Eq.~\eqref{eq:operator_ambrosio} if and only if there are smooth functions $V\in C^{\infty}(M), \lambda \in C^{\infty}(P)$ such that:
\[ \begin{cases} n^a\nabla_a \lambda=1,\\  \omega = -\frac{1}{2}\textnormal{d}(V\circ\pi) + \textnormal{d} \lambda,\\ U=-\frac{1}{2}\Delta_g V +\frac{1}{4}g(\textnormal{d}V,\textnormal{d}V).  \end{cases}\]
\end{lemm}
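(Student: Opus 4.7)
The plan is to split the equivalence into a direct pointwise calculation identifying what a section $s$ must satisfy, followed by a short argument exploiting the fact that every principal $(\R,+)$-bundle is trivial, to convert the section-level condition into the existence of the global function $\lambda$.

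First I would expand $\mathcal{S}$ along an arbitrary section $s$. Writing $\phi=u\tau_s$ and $A=-s^*\omega$, a direct Leibniz computation using $D^\omega_b\tau_s=-A_b\tau_s$ and coupling to $\nabla$ gives
\[ \mathcal{S}(u\tau_s)=\bigl[-\Delta_g u+2g^{ab}A_a\,\partial_b u+\bigl(g^{ab}\nabla_aA_b-g^{ab}A_aA_b+U\bigr)u\bigr]\tau_s.\]
Matching this with $-\Delta_g u+g(\dd V,\dd u)$ by comparing the first- and zeroth-order terms immediately produces the two scalar equations $s^*\omega=-\tfrac{1}{2}\dd V$ and $U=-\tfrac{1}{2}\Delta_gV+\tfrac{1}{4}g(\dd V,\dd V)$. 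In particular, the condition on $U$ in the statement is forced, and the entire question reduces to whether a section $s$ with $s^*\omega=-\tfrac{1}{2}\dd V$ exists.

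For the forward direction I would fix a global trivialisation $P\simeq\Mbar\times\R$ with fibre coordinate $t$, in which $n=\partial_t$. Invariance of $\omega$ along fibres combined with $\omega(n)=1$ forces $\omega=\dd t+\pi^*A_0$ for some 1-form $A_0$ on $\Mbar$; for a candidate section $s(x)=(x,f(x))$ the condition $s^*\omega=-\tfrac{1}{2}\dd V$ reads $A_0=-\tfrac{1}{2}\dd V-\dd f$, and substituting back yields
\[ \omega=\dd(t-f\circ\pi)-\tfrac{1}{2}\pi^*\dd V.\]
Thus $\lambda:=t-f\circ\pi$ satisfies $n\lambda=1$ and $\omega=-\tfrac{1}{2}\dd(V\circ\pi)+\dd\lambda$ as announced, and flatness $\dd\omega=0$ falls out for free. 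Conversely, given $V$ and $\lambda$ as in the statement, $n\lambda=1$ integrates in the trivialisation to $\lambda(x,t)=t+h(x)$ for some $h\in C^\infty(\Mbar)$, so that $s(x):=(x,-h(x))$ defines a global section on which $\lambda$ vanishes identically; pulling back the decomposition of $\omega$ by this $s$ gives $s^*\omega=-\tfrac{1}{2}\dd V$, and the initial calculation combined with the prescribed value of $U$ identifies $\mathcal{S}$ along $s$ with $-\Delta_{\mathcal{E}}$.

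The only delicate step is the forward-direction passage from the existence of a section verifying $s^*\omega=-\tfrac{1}{2}\dd V$ to the existence of a global primitive $\lambda$ on $P$. Stated invariantly, one would have to check that the closed 1-form $\omega+\tfrac{1}{2}\pi^*\dd V$ has trivial class in $H^1(P)\cong H^1(\Mbar)$, a fact guaranteed by the vanishing of its pullback by $s$; working directly in a fibre trivialisation, as above, bypasses this cohomological detour and makes $\lambda$ completely explicit, which is why it is the route I would take.
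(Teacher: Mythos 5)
Your proposal is correct and follows essentially the same route as the paper: expand $\mathcal{S}$ along a section, equate first- and zeroth-order coefficients with $-\Delta_{\mathcal E}$ to get $s^*\omega=-\tfrac12\,\mathrm{d}V$ and the formula for $U$, then use triviality of the $(\R,+)$-bundle (the fibre coordinate of a trivialisation) to produce $\lambda$, and conversely take the zero level set of $\lambda$ as the distinguished section. Your explicit writing of $\omega=\mathrm{d}t+\pi^*A_0$ just makes fully transparent a step the paper leaves implicit, so no substantive difference.
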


\begin{proof}
The conditions are sufficient as if $T=\Delta_{\mathcal{E}}$ then $T$ is of the form~\eqref{eq:general_form} with:
\[ c=0, \quad b^i= \delta^{-\frac{1}{2}}\partial_j(g^{ij}\delta^{\frac{1}{2}})-g^{ij}\nabla_j V, \]
hence can be written in the form~\eqref{eq:general_forme2} with: \[ A_i = \frac{1}{2}\nabla_i V, \quad W=-U=-\frac{1}{4}g(dV,dV)+\frac{1}{2}\Delta_g V.  \]
Observe now that $\{\lambda=0\}$ determines a distinguished global section $s$ of $P$, and the pullback of $\omega$ along this section is precisely $-\frac{1}{2}\textrm{d}V$.
Conversely, if such a section $s$ exists then if $\phi= v\tau_s$ and $A=s^*\omega$ we have:
\[ (-g^{ab}\,D_a^\omega\,D_b^\omega +U)\phi=(-\Delta_g v +2g^{ab}A_b\nabla_av+  (\nabla_a A^a-A_aA^a+ U)v)\tau_s \] 
Equating with Eq.~\eqref{eq:operator_ambrosio} we conclude that:
\[2A_b=\nabla_b V, \quad U=-\nabla_a A^a+A_aA^a.\]
Finally, $s$ determines a global trivialisation $P\simeq \Mbar\times \R$ and projection onto the second factor then yields a function $\lambda$ satisfying the required conditions.
\end{proof}
\begin{rema*} 
Observe that neither $V$ or $\lambda$ are uniquely determined. In particular, one may add an arbitrary constant to either function without affecting the conclusions of the lemma.
\end{rema*}
We also observe that by naturality of the exterior derivative $d$, these conditions behave well under the action of principal bundle automorphisms, that is we have: 
\begin{lemm}
Let $\Phi: P \rightarrow P$ be a principal bundle automorphism covering a diffeomorphism $\phi: \Mbar \rightarrow \Mbar$, then if $(g,\omega)$ satisfy the hypotheses of Lemma~\ref{lemm:characterisation} then so does $(\phi^*g, \Phi^*\omega)$.
\end{lemm}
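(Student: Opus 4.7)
My plan is to exhibit explicitly the functions $\tilde V$ and $\tilde\lambda$ that witness the three conditions of Lemma~\ref{lemm:characterisation} for the pair $(\phi^*g,\Phi^*\omega)$. The natural candidates are $\tilde V := V\circ\phi$ and $\tilde\lambda := \lambda\circ\Phi$, and the proof reduces to verifying the three bullets of Lemma~\ref{lemm:characterisation} via naturality of the exterior derivative and diffeomorphism invariance of the Laplace--Beltrami operator.

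For the first condition $n^a\nabla_a\tilde\lambda = 1$, I would exploit the fact that a principal bundle automorphism is equivariant with respect to the $\R$-action, hence $\Phi_* n = n$ as vector fields on $P$. The chain rule then gives $n(\lambda\circ\Phi)(p) = d\lambda_{\Phi(p)}(\Phi_* n_p) = (n\lambda)(\Phi(p)) = 1$, and since for a scalar function $n^a\nabla_a\tilde\lambda$ is just the directional derivative $n(\tilde\lambda)$, this settles the point. For the second condition, I would use that $\Phi$ covers $\phi$, i.e.\ $\pi\circ\Phi = \phi\circ\pi$, so that $\tilde V\circ\pi = (V\circ\pi)\circ\Phi$, and then apply $\Phi^*$ to the identity $\omega = -\tfrac12\,d(V\circ\pi) + d\lambda$. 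Naturality of $d$ under pullbacks then yields $\Phi^*\omega = -\tfrac12\,d(\tilde V\circ\pi) + d\tilde\lambda$ immediately.

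The third condition requires a short tensorial computation. The potential $\tilde U$ that must appear in the Schr\"odinger operator associated with $(\phi^*g,\Phi^*\omega)$ is, by functoriality of the construction in Lemma~\ref{lemm:characterisation}, the pullback $U\circ\phi$ of the original potential. It then remains to check the identity
\[ -\tfrac12\,\Delta_{\phi^*g}(V\circ\phi) + \tfrac14\,(\phi^*g)\bigl(d(V\circ\phi),d(V\circ\phi)\bigr) = \Bigl(-\tfrac12\,\Delta_g V + \tfrac14\,g(dV,dV)\Bigr)\circ\phi, \]
which is nothing but the well-known diffeomorphism equivariance of the Laplace--Beltrami operator together with the naturality of the pointwise metric norm on one-forms under pullback.

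I do not expect any genuine obstacle in this argument: the only conceptual point is to keep track of the two distinct roles played by $\Phi$ (acting on the total space $P$ and on sections/forms thereof) and by $\phi$ (acting on the base $\Mbar$). Once the correct candidates $\tilde V$ and $\tilde\lambda$ are fixed, every verification is a one-line invocation of naturality or equivariance.
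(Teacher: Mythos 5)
Your proof is correct and follows exactly the route the paper intends: the paper gives no separate proof beyond invoking naturality of the exterior derivative, and your explicit verification (with $\tilde V=V\circ\phi$, $\tilde\lambda=\lambda\circ\Phi$, equivariance giving $\Phi_*n=n$, and diffeomorphism invariance of $\Delta_g$ for the potential) simply fills in those details.
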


The principal connection $\omega$ is flat, and therefore there is a section $\sigma: U \subset \Mbar \rightarrow P$ along which the operator can be expressed as the Schrödinger operator:
\[ -\Delta_g + U.\]
Indeed, let $s, V$ be fixed, then: $\sigma= s +\frac{1}{2}V$, will satisfy:
\[ \sigma^*\omega= 0,\]
the following Lemma is also an immediate consequence:
\begin{lemm}\label{lemm:ParallelGaugeMetric}
Let $\pi : P\rightarrow \Mbar$  and $\omega$ satisfy either of the equivalent conditions of Lemma~\ref{lemm:characterisation}, then 
up to multiplication by a positive constant, there exists a unique positive parallel section $\zeta$ of $\mathscr{G}[-2]$. More precisely, if $s: \Mbar\rightarrow P$ is a global section given by Lemma~\ref{lemm:characterisation} then, there is a constant $c\in \R$ such that:
\[ \zeta = e^{c-V}\tau_s^{-2}.\]
\end{lemm}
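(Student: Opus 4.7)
The plan is to work in the trivialisation provided by the distinguished section $s:\Mbar\to P$ coming from Lemma~\ref{lemm:characterisation}, reduce the parallel condition on $\mathscr{G}[-2]$ to a first-order ODE for a scalar function, and then integrate it explicitly using the expression of $\omega$ given by that same lemma.

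First I would pass from $\tau_s$ to the associated non-vanishing section $\tau_s^{-2}$ of $\mathscr{G}[-2]$, obtained by applying the representation $\mu\mapsto e^{-2\mu}$. Since $D^\omega\tau_s=-A\,\tau_s$ with $A=-s^*\omega$, the Leibniz rule on $\mathscr{G}[-2]\simeq \mathscr{G}[1]^*\otimes\mathscr{G}[1]^*$ (or equivalently the general rule that the connection form picks up a factor equal to the weight of the representation) gives
\[ D^\omega\tau_s^{-2}= 2A\,\tau_s^{-2}. \]
Writing an arbitrary section of $\mathscr{G}[-2]$ as $\zeta=f\,\tau_s^{-2}$ for some smooth $f:\Mbar\to\R$, the condition $D^\omega\zeta=0$ becomes
\[ \textnormal{d}f + 2Af = 0. \]

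Next I would use the characterisation $\omega = -\tfrac{1}{2}\textnormal{d}(V\circ\pi)+\textnormal{d}\lambda$ of Lemma~\ref{lemm:characterisation}, together with the fact that the section $s$ there was constructed as the level set $\{\lambda=0\}$. Pulling back along $s$ kills $\textnormal{d}\lambda$, so $s^*\omega=-\tfrac{1}{2}\textnormal{d}V$ and hence $A=\tfrac{1}{2}\textnormal{d}V$. The parallel equation reduces to $\textnormal{d}f+f\,\textnormal{d}V=0$, i.e.\ $\textnormal{d}(fe^V)=0$. Since $\Mbar$ is connected, this forces $f=e^{c-V}$ for some constant $c\in\R$, which is the stated formula. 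Positivity of the section $\zeta$ is equivalent to $e^c>0$, so this determines the one-parameter family of positive parallel sections.

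Uniqueness up to a positive multiplicative constant is essentially built into the above derivation: the parallel condition is a first-order linear ODE along any path, and on the connected manifold $\Mbar$ two parallel sections agreeing up to scalar at one point must agree up to that same scalar everywhere. Alternatively one may simply observe that any two positive parallel sections $\zeta_1,\zeta_2$ satisfy $D^\omega(\zeta_1/\zeta_2)=0$ in the trivial bundle, hence the ratio is a positive constant. The only subtlety is implicit: we are relying on $\omega$ being flat (guaranteed, as recalled right before the statement, by $\textnormal{d}\omega=\textnormal{d}(-\tfrac12\textnormal{d}(V\circ\pi)+\textnormal{d}\lambda)=0$), which is exactly what makes the ODE globally integrable without holonomy obstructions. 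I don't anticipate a real obstacle in the argument; the only point requiring care is bookkeeping the factor $-2$ when inducing $D^\omega$ from $\mathscr{G}[1]$ to $\mathscr{G}[-2]$.
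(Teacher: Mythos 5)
Your proof is correct and follows essentially the same route as the paper: the paper obtains the lemma as an immediate consequence of the flat gauge $\sigma=s+\tfrac12 V$ (whose induced frame is parallel and equals $e^{V/2}\tau_s$), which is just the integrated form of your equation $\textnormal{d}f+2Af=0$ with $A=\tfrac12\textnormal{d}V$ in the $s$-gauge. Your bookkeeping of the weight $-2$ and the sign conventions ($A=-s^*\omega=\tfrac12\textnormal{d}V$) matches the paper, and the uniqueness-up-to-positive-constant argument (constant ratio of parallel sections on connected $\Mbar$) is the intended one.
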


Observe that \[\tau_\sigma \equiv \zeta^{\frac{1}{2}}\]
 is a parallel nowhere vanishing section of $\mathscr{G}[1]$.
Since $V$ itself is at most determined up to addition of an arbitrary constant, there is no loss in generality in assuming that \[\zeta=e^{-V}\tau_s^{-2}.\]

\subsection{Factorisation in an arbitrary gauge} \label{sec:gauge_computations}
Now let $s$ be an \emph{arbitrary} section of $\pi: P\rightarrow \Mbar$ and $A_b$ the (local) connection form. Then evaluated along the section, $\mathcal{S}$ can be written:
\[ \begin{aligned} -\Delta^s_{g,U}:=-g^{ab}(\nabla_a - A_a)(\nabla_b - A_b)+U. \end{aligned}\]
Following~\cite{Lee_Uhlmann_1989}, we shall seek a factorisation of $\Delta^s _{g,U}$ , up to families of smoothing operators: $\mathcal{E}'(\partial M)\rightarrow C^\infty(\partial M)$ in the following form:
\begin{equation}\label{eq:DesiredFactorisation} -\Delta^s_{g,U} \sim (D_r +i A_r + iE(x)-iB(x,D_{y}))(D_r+iA_r+iB(x,D_{y}))\end{equation}
where $\sim$ means equality up to smoothing operators and: \[D_r=-i\,\partial_r, \qquad E= -\frac{1}{2}\partial_r \ln \delta.\] We emphasise that for each $r$, $B(x,D_{y})$ is a pseudo-differential operator of order at most $1$ on $\partial M$. Introducing:
\[Q=Q_2+Q_1+q_0,\] with:
\[\begin{cases}Q_2=g^{\alpha\beta}D_\alpha D_\beta,& %
\\ Q_1= -i\delta^{-\frac{1}{2}}\partial_\alpha(g^{\alpha\beta}\delta^{\frac{1}{2}})D_\beta +2iA^\beta D_\beta, & %
\\
q_0= U +\delta^{-\frac{1}{2}}\partial_\alpha(\delta^{\frac{1}{2}}A^\alpha)-A_\alpha A^\alpha, &  \end{cases}\]
it follows from Eq.~\eqref{eq:DesiredFactorisation} that $B$ must satisfy:
\begin{equation}\label{eq:DefiningB} i[D_r+i A_r,B]-Q+B^2-EB\sim 0. \end{equation}
Define $q_2,q_1,b$ to be the full symbols of $Q_2,Q_1,B$ respectively and let $\xi'=(\xi_\alpha)$ be the fibre coordinates on the cotangent bundle of $\partial M$, then using the symbol calculus, and in particular the fact that the symbol of the composite of two pseudo-differential operators $H,G$ (see~\cite[Theorem 4.3]{Treves_1980}) with symbols $h,g$ is given, up to smoothing operators, by:
\[ \sum_{K}\frac{1}{K!}\partial^K_\xi h D^K_xg,\]
we arrive at:
\begin{equation}\label{eq:DefbSymbol}-q_0 - q_1 -q_2-Eb +\partial_r b + \sum_{|K|\geq 1}\frac{1}{K!}\partial^K_{\xi'} bD^K_{y}A_r + \sum_{K}\frac{1}{K!}\partial^K_{\xi'} bD^K_{y} b \sim 0.\end{equation}
In particular, writing: \[ b(x,\xi') \sim \sum_{j\leq 1} b_j(x,\xi'),\]
and solving Eq.~\eqref{eq:DefbSymbol} by order of homogeneity in $\xi'$, we find that the sequence $(b_j)$ satisfies the following relations:
\begin{gather}\label{eq:Order1} ||\xi'||\equiv \sqrt{g^{\alpha\beta}\xi_\alpha\xi_\beta}=\sqrt{q_2}=- b_1,\\ \label{eq:Order0} 2b_1b_0= -\partial_r b_1 +q_1+Eb_1-\sum_{\alpha}\partial_{\xi_\alpha}b_1D_\alpha b_1, \\ \label{eq:Order-1}\begin{split}2b_{-1}b_1 =q_0 +Eb_0-\partial_rb_0-\sum_{\alpha} \partial_{\xi_\alpha}b_1D_\alpha A_r-b_0^2\\-\sum_\alpha\left(\partial_{\xi_\alpha} b_1D_\alpha b_0 + \partial_{\xi_\alpha}b_0D_\alpha b_1 \right),\end{split}\\\label{eq:recursivegeneric} \begin{split}2b_1b_j =-\partial_r b_{j+1} +Eb_{j+1}-\sum_{1\leq |K| \leq -j} \frac{1}{K!} \partial^K_{\xi'} b_{j+|K|+1}D^k_yA_r \\-\hspace{-2.5em} \sum_{\substack{j\leq m \leq 1\\0\leq |K| \leq m-j\\ (m,|K|)\notin\{ (j,0), (1,0)\}}}\hspace{-2.5em}\frac{1}{K!}\partial^K_{\xi'} b_m D^K_yb_{j+1+|K|-m}\end{split}  \quad (j\leq -2).\end{gather}
Each of the $b_j(x,\xi')$ is an expression that is positive homogeneous of degree $j$ in $\xi'$ (as can be seen by an immediate induction argument), hence $ \sum_{j\leq 1} b_j$ defines a classical (formal) symbol that determines a unique pseudo-differential operator up to smoothing operators, justifying the existence of the factorisation in the sense of pseudo-differential operators. 

Following~\cite{Lee_Uhlmann_1989,Valero_2024}, we point out that the principal symbol $c_1$ is chosen to be negative. This is to ensure the theory developed in~\cite[Chapter III]{Treves_1980}, and in particular Theorem 1.1 of that section, can be applied directly to the operator $B$ to prove Lemma~\ref{lemm:BoundaryOperator}, below. This lemma, which shows how to relate the symbol of the  DN map $\Lambda^1$ to that of  $B$ along the boundary $\partial M$, is the main motivation for studying the factorisation:
\begin{lemm}\label{lemm:BoundaryOperator}
Let $\phi\in\Gamma\big(\mathscr{G}[1]\big)$ be a solution of ~\eqref{eq:DirichletGauge} then if $\phi=u\tau$, $\phi_0=\phi\lvert_{\partial M}=u_0\tau$, $\tau\in\Gamma(\mathscr{G}[1])$, along an arbitrary section, then: \[(D_r+iA_r)u|_{\partial M} = -iB(x,D_{y})u|_{\partial M} + R\,u_0\,,\] for a smoothing operator $R$ on $\partial M$.
\end{lemm}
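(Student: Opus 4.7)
The conclusion is equivalent to the statement that $Q^- u|_{r=0} = R u_0$ for some smoothing operator $R$ on $\partial M$, where $Q^- := D_r + iA_r + iB(x,D_y)$ is the rightmost factor in~\eqref{eq:DesiredFactorisation}. Indeed, expanding $Q^- u|_{r=0} = (D_r + iA_r) u|_{r=0} + iB(x,D_{y}) u_0$ (using $u|_{r=0} = u_0$) and rearranging yields the lemma. The plan is to build a Poisson parametrix $T$ for the Dirichlet problem~\eqref{eq:DirichletGauge} directly from the factorisation, so that $Q^-(T u_0)$ is smooth and depends on $u_0$ through a smoothing operator, and then use elliptic regularity to replace $u$ by $Tu_0$ modulo a smoothing operator applied to $u_0$.

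The first step is to construct $T : C^\infty(\partial M) \to C^\infty(\mathscr{C})$. Because $b_1(y,\xi') = -\|\xi'\|_g$ is real and strictly negative on $T^*\partial M\setminus 0$, the operator $Q^-$ lies in the scope of~\cite[Chapter~III, Theorem~1.1]{Treves_1980}: one obtains a pseudo-differential $T$, tangential in $y$ with smooth dependence on $r$ and amplitude built recursively from the very symbols $b_j$ solving \eqref{eq:Order1}--\eqref{eq:recursivegeneric}, such that $Tu_0|_{r=0} = u_0$ and $u_0 \mapsto Q^-(Tu_0)$ is a smoothing operator. Combining this with the factorisation~\eqref{eq:DesiredFactorisation} gives $-\Delta^s_{g,U}(Tu_0) = S' u_0$ for a smoothing $S'$, so $T u_0$ is an approximate solution of~\eqref{eq:DirichletGauge}.

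I would then compare $T u_0$ with the actual solution $u$. The residual $f := u - T u_0$ satisfies $-\Delta^s_{g,U} f = -S' u_0$ with $f|_{r=0} = 0$, so by elliptic regularity for the Dirichlet problem the map $u_0 \mapsto f$ is a smoothing operator $C^\infty(\partial M) \to C^\infty(\Mbar)$. Decomposing $Q^- u = Q^-(T u_0) + Q^- f$ and restricting to $r = 0$ then produces two terms that are each smoothing operators applied to $u_0$: the first by construction of $T$, the second because $Q^-$ has order at most one and $f$ already depends on $u_0$ in a smoothing way. Setting $R$ equal to the sum yields the lemma.

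The principal obstacle is the first step, namely the construction of $T$ via Treves' theorem. The crucial point is that the sign convention $b_1 = -\|\xi'\|_g$ — already forced by~\eqref{eq:Order1} and corresponding to the decaying mode $e^{-r\|\xi'\|}$ — is exactly what makes the forward Cauchy problem for $Q^-$ from $r=0$ well-posed modulo smoothing; the opposite root would select the growing mode, for which no such parametrix exists on the collar. Beyond this, the argument is routine bookkeeping with the composition of pseudo-differential and smoothing operators.
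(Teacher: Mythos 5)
The paper itself gives no proof of this lemma: it defers to the argument of \cite{Lee_Uhlmann_1989} (and its gauge adaptation in \cite{Valero_2024}). That argument applies the right-hand factor to the \emph{actual} solution, i.e.\ sets $w=(D_r+iA_r+iB)u$, notes that by \eqref{eq:DesiredFactorisation} $w$ satisfies a first-order equation governed by the left factor $D_r+iA_r+iE-iB$, which is parabolic in the direction of \emph{decreasing} $r$ precisely because $b_1=-\lVert\xi'\rVert$, and then propagates regularity from the interior (where $u$, hence $w$, is smooth with smoothing dependence on $u_0$ by interior elliptic regularity and continuity of the solution map) down to $r=0$ using the parametrix of \cite[Ch.~III, Thm.~1.1]{Treves_1980}. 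Your proposal runs the same two ingredients (the factorisation and Treves' parabolic parametrix) in the opposite order: you build an approximate Poisson operator $T$ from the right factor, for which the forward Cauchy problem from $r=0$ is the well-posed one, and then compare $Tu_0$ with $u$. This is a legitimate variant; you have correctly identified that the sign of $b_1$ is the crux. What the reference proof buys is that it never needs a global comparison: interior regularity plus the backward-parabolic evolution suffice. What your route buys is the Poisson parametrix itself, at the cost of a comparison step that needs more care than you give it.

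That comparison step is where your write-up has a gap as literally stated. First, $Tu_0$ is defined only on the collar $\mathscr{C}\simeq[0,a)\times\partial M$, so $f=u-Tu_0$ is not a function on $\Mbar$ and ``elliptic regularity for the Dirichlet problem'' does not apply to it: on the collar, $f$ has no prescribed data on the inner boundary $\{r=a\}$. The standard repair is either to cut off, replacing $Tu_0$ by $\chi(r)Tu_0$ with $\chi\equiv1$ near $r=0$ --- the commutator error is supported in $\{r>0\}$, where the parabolic parametrix is smoothing, so it contributes a smoothing operator in $u_0$ --- and then invoke solvability with estimates of the Dirichlet problem on all of $\Mbar$ (available here because the operator is gauge-equivalent to $-\Delta_{\mathcal E}$, whose Dirichlet realisation is positive); or to use only \emph{local} boundary elliptic regularity near $\{r=0\}$, which requires in addition an a priori bound such as $\lVert u\rVert_{H^1}\lesssim\lVert u_0\rVert_{H^{1/2}}$. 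Second, and relatedly, to conclude that $u_0\mapsto f$, and hence $R$, is a \emph{smoothing operator} (continuous from Sobolev spaces of negative order, or at least from $H^{1/2}(\partial M)$, into $C^\infty$), qualitative smoothness of $f$ is not enough; you need the quantitative estimates just mentioned, exactly as the reference proof needs continuity of the solution map. A minor further imprecision: the symbol of $T$ is not ``built recursively from the $b_j$'' solving \eqref{eq:Order1}--\eqref{eq:recursivegeneric}; it is produced by the transport-equation recursion of Treves' theorem applied to the operator $B$ (plus the order-zero term $A_r$), which is a different, though standard, construction. With these points addressed, your argument is a correct alternative proof of the lemma.
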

We refer to \cite{Lee_Uhlmann_1989} for the a proof in the case of Laplace-Beltrami operators and \cite{Valero_2024} for gauge Laplacians; our case is very similar, and the proof is omitted. 

\subsection{Discussion and proof of Proposition~\ref{prop:general_problem} and Theorem~\ref{thm3}}\label{Section2.3}

We are now ready to proceed with the proof of Proposition~\ref{prop:general_problem} and Theorems~\ref{thm1} and \ref{thm3}. Before doing so, we observe that whilst we have inserted our original problem and operator into a gauge theory, there is a sense in which the gauge symmetry is not completely arbitrary; indeed, there are at least two distinguished choices of gauge: 
\begin{itemize} \item  gauges in which the operator takes the desired form~\eqref{eq:operator_ambrosio} for some $V$,
\item  gauges in which the (local) connection form vanishes.
\end{itemize}
In these two cases, we have the following expressions:
\renewcommand{\arraystretch}{1.5}
\begin{table}[h!]
\centering
\begin{tabular}{c|c|c}
&$A_b = \frac{1}{2}\nabla_b V$ & $A_b=0$ \\\hline
 $q_1$ & $(-i\delta^{-\frac{1}{2}}\partial_\alpha(g^{\alpha\beta}\delta^{\frac{1}{2}})+ ig^{\alpha\beta}\partial_\alpha V)\xi_\beta$&$-i\delta^{-\frac{1}{2}}\partial_\alpha(g^{\alpha\beta}\delta^{\frac{1}{2}})\xi_\beta$\\\hline$q_0$&$-\frac{1}{2}\delta^{-\frac{1}{2}}\partial_r(\delta^{\frac{1}{2}}\partial_rV)+\frac{1}{4}(\partial_rV)^2$ &$-\frac{1}{2}\Delta_g V +\frac{1}{4}g(dV,dV)=U$ 
\end{tabular}
\caption{\label{t:symbolQ} Symbol of $Q$ according to gauge}
\end{table}

One might hope that having two distinguished choices of gauge will enable us to extract more information than in the general case where the potential $U$ and the connection are completely decoupled. However, this expectation is not met (observe in particular that the difference between the expressions of $q_1$ and $q_0$ only involves transverse derivatives of $V$). Instead, the couple $(g,V)$ is uniquely determined only up to first order as stated in Proposition~\ref{prop:general_problem}, which we now prove:

\begin{proof}[Proof of Proposition~\ref{prop:general_problem}]
First, let us make an arbitrary choice of parallel section $\zeta$ to transform the DN map into the form~\eqref{eq:DNmap2}. Let us assume that there is a gauge in which the operator takes the desired form~\eqref{eq:operator_ambrosio} for some fixed $V$; the connection form is then $\omega=-\frac{1}{2}\textrm{d}V$. Observe that this gauge is fixed up to constant global gauge transformations: $s\mapsto s + \alpha$. Fixing once and for all a choice, we obtain a non-vanishing section $\tau_s$ of $\mathscr{G}[1]$, we will then have: $\zeta \tau^2_s=e^{-V+c}$, for some arbitrary constant $c$. Since $V$ itself can only be determined up to arbitrary constants, it is no loss of generality in setting $c=0$.

Working in the chosen gauge $s$, projected out on boundary normal coordinates, Eq. \eqref{eq:DNmap2} then becomes:
\[ \Lambda^1_\zeta \phi_0=(\partial_ru - \frac{1}{2}\partial_rVu)|_{\partial M} e^{-V}\sqrt{\delta}\tau_s^{-1} \,dy^1\wedge \dots \wedge dy^{n-1}\,,\]
where we have set $\phi=u\tau_s$ and $\phi$ solves Eq.~\eqref{eq:DirichletGauge}.

Applying Lemma~\ref{lemm:BoundaryOperator} in our gauge, we have the following relationship between the DN map and the operator $B$ constructed for the factorisation:
\[\Lambda^1_\zeta\phi_0\sim B_1(x,D_y)\phi|_{\partial M}e^{-V}\sqrt{\delta}\tau_s^{-1}\, dy^1\wedge \dots \wedge dy^{n-1}, \]

On the other hand, one might choose to work in the gauge $\sigma=\zeta^{\frac{1}{2}}$ where the local connection form is vanishing, and in this case the factorisation will lead to a relation:
\[ \Lambda^1_\zeta \phi_0 =(\partial_r v)|_{\partial M}\sqrt{\delta}\tau_\sigma^{-1}\,dy^1\wedge \dots \wedge dy^{n-1}\sim B_2(x,D_y)v|_{\partial M}\sqrt{\delta}\tau_\sigma^{-1}dy^1\wedge \dots \wedge dy^{n-1}, \]
where $\phi = v\tau_\sigma$ is a solution of Eq.~\eqref{eq:DirichletGauge}.

Now, the principal symbol of the operators $B_1$ and $B_2$ are identical and given by $b_1=-\lVert\xi'\rVert$. Hence the principal symbol of the DN map, expressed in the gauge $\sigma$, will enable us to determine the bilinear form $g^{\alpha\beta}\delta$. Taking the determinant, we find:
$\delta^{n-2}$ and hence obtain the boundary value of the inverse metric $g^{\alpha\beta}$.
Returning then to the expression in the gauge $s$, we can determine the boundary value of the weight $V$, (with the constant fixed arbitrarily by our previous choices.)
In both cases, it then follows that we can also determine the boundary values of all the other coefficients $(b_j)_{j\leq 1} $ in the expansions of either of $B_1$ and $B_2$.

With this data, we now look to~\eqref{eq:Order1}-\eqref{eq:recursivegeneric} to find expressions for the radial derivatives of $V$ and $g$.  In the gauge $\sigma$, the weight $V$ does not intervene at all in Eq.~\eqref{eq:Order0} and the terms can be rearranged to obtain the relationship given in~\cite[Eq. (1.10)]{Lee_Uhlmann_1989} by setting:
\[h^{\alpha\beta}=\partial_r g^{\alpha\beta}, \quad h=h^{\alpha\beta}g_{\alpha\beta}=2E, \quad k^{\alpha\beta}=h^{\alpha\beta}-hg^{\alpha\beta},\]
that is:
\begin{equation}\label{eq:order0c} b_0=-\frac{1}{4\lVert\xi'\rVert^2} k^{\alpha\beta}\xi_\alpha\xi_\beta +T(g^{\alpha\beta})\,, \end{equation}
where $T$ depends on $\xi'$, $g^{\alpha\beta}$ and its \emph{tangential} derivatives.
Repeating the reasoning carried out in~\cite{Lee_Uhlmann_1989}, we can therefore determine $h^{\alpha\beta}=\partial_rg^{\alpha\beta}$. 
\end{proof}

Note that unfortunately and despite the promising start illustrated in Proposition~\ref{prop:general_problem}, the procedure stops as soon as the coupling between $g$ and $V$  arises in the equations governing the induction, preventing us from continuing to solve for their higher-order radial derivatives. Indeed, working in either gauge and moving to the next step in the induction, see Eq.~\eqref{eq:Order-1},  we obtain a single relationship involving $\partial_r V, \partial^2_rV$ and $\partial^2_rg^{\alpha\beta}$. For instance, using the section $\sigma$ in which the connection form vanishes, this relationship is:
\[2b_{-1}b_1=U+\frac{1}{4\lVert \xi'\rVert^2}\partial_rk^{\alpha\beta}\xi_\alpha\xi_\beta +R\,,\]
where the remainder $R$ only depends on $g^{\alpha\beta}$, its first radial derivative and tangential derivatives.
The expression~\eqref{eq:defU} of the potential $U$ explicitly involves first and second radial derivatives of $V$, as claimed.

One might hope to gain an additional equation by working simultaneously along the sections $s$ and $\sigma$. Nevertheless, up to tangential derivatives, the relationships are in fact identical. For instance, using the notations from Table~\ref{t:symbolQ}, we see that:
\[\begin{aligned} U&= -\frac{1}{2}\delta^{-\frac{1}{2}}\partial_i\left(\delta^{\frac{1}{2}}g^{ij}\partial_j\right)V +\frac{1}{4}g^{ij}\partial_iV\partial_jV,\\&= q_0-\frac{1}{2}\delta^{-\frac{1}{2}}\partial_\alpha\left(\delta^{\frac{1}{2}}g^{\alpha\beta}\partial_\beta\right)V+\frac{1}{4}g^{\alpha\beta}\partial_\alpha V \partial_\beta V \,.\end{aligned} \]

In view of this, we will therefore consider inverse problems for which one of either $V$ or $g$ is given; these cases are covered in Theorems~\ref{thm1} and~\ref{thm3}. As observed in the introduction, the proof of Theorem~\ref{thm1} is a minor modification of Lee and Uhlmann's original argument and is omitted. We now focus on the proof of Theorem~\ref{thm3}, which concerns the case in which the metric $g$ is given and $V$ is to be recovered and in which there is a surprising indetermination.

\begin{proof}[Proof of Theorem~\ref{thm3}]
Repeating the first steps of the proof of Proposition~\ref{prop:general_problem} we can recover the values of the weight $V$ along the boundary $\partial{M}$ up to an additive constant, provided we work in a gauge in which $A_b=\frac{1}{2}\nabla_b V$, which we assume to exist. We remain in such a gauge for the remainder of the computation. 

The second step $j=0$ (see ~\eqref{eq:Order0}) does not provide any new information: no new radial derivatives of $V$ arise since the gauge invariant principal symbol is not affected by the weight $V$. However, using the expression for $q_1$ in the first column of Table~\ref{t:symbolQ}, we may rewrite this equation:
\[b_0 = \frac{ig^{\alpha\beta}\partial_\alpha V\xi_\beta}{2b_1}+ R\,,\]
where the remainder term $R$ is completely independent of $V$.
In particular,
\[\partial_r b_0= \frac{ig^{\alpha\beta}\partial_\alpha A_r \xi_\beta}{b_1}+\tilde{R}\,,\]
where, again, the remainder term $\tilde{R}$ does not depend on $V$. Furthermore:
\[ \partial_{\xi_\alpha}b_1=-\frac{g^{\alpha\beta}\xi_\beta}{\lVert \xi'\rVert}=\frac{g^{\alpha \beta}\xi_\beta}{b_1}\,,\]
so that:
\begin{equation}\label{eq:simplification} \partial_rb_0= -\sum_{\alpha}\partial_{\xi_\alpha}b_1 D_\alpha A_r +\tilde{R}\,.  \end{equation}

The radial derivatives of $V$ surface at the step $j=-1$ in Eq.~\eqref{eq:Order-1}, that we reproduce below:
\[\begin{split}2b_{-1}b_1 =q_0 +Eb_0-\underline{\partial_rb_0-\sum_{\alpha} \partial_{\xi_\alpha}b_1D_\alpha A_r}-b_0^2-\sum_\alpha\left(\partial_{\xi_\alpha} b_1D_\alpha b_0 + \partial_{\xi_\alpha}b_0D_\alpha b_1 \right).\end{split}\]  In this expression, the coefficient $q_0$ and, a priori, the connection coefficient $A_r$ and $\partial_rb_0$ will all contain radial derivatives of $V$. However, by Eq.~\eqref{eq:simplification} the underlined term above does not in fact depend on $V$. Hence, only $q_0$ contributes radial derivatives of $V$. Using its expression and rearranging the terms we can write:
\[ b_{-1} = \frac{1}{4 b_1}(-\partial_r^2V+E\partial_rV +\frac{1}{2}(\partial_rV)^2)+R\,,\]
where the remainder term depends on the metric and the tangential derivatives $\partial_\alpha V$.
This equation is the basis for the dichotomy in the result, as two radial derivatives of $V$ arise simultaneously with only one free equation. For the higher-order derivatives on the other hand, Eq.~\eqref{eq:recursivegeneric} enables us to show by induction that for $j\leq -2$:
\[ b_{j}=\frac{1}{2}(-2b_1)^j\partial_r^{-j+1}V + R_j\, \]
where in the remainder $R_j$ the order of any radial derivatives of $V$ is at most $-j$, which shows that the rest of the radial derivatives of $V$ can be determined iteratively from the Dirichlet to Neumann map.
\end{proof}

As mentioned in the introduction, this result can be improved in the following manner:
\begin{theo}\label{thm4}
Suppose that the Riemannian volume form $\omega_g$ is known in a neighbourhood of the boundary. Then, along the boundary, the radial Taylor coefficients in boundary normal coordinates of the metric $g$ are uniquely determined by the Dirichlet-to-Neumann map $\Lambda^1$ and
those of $V$ are determined, provided either $\partial_r V$ or $\partial_r^2V$ is prescribed. In the latter case, $\partial_rV$ is calculated by solving a second order polynomial equation which may have two distinct roots.
\end{theo}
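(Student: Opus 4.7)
My plan is to extend the inductive Lee--Uhlmann argument of Theorem~\ref{thm3} by reading off the radial Taylor coefficients of $g$ and of $V$ simultaneously at each step, and to exploit the knowledge of $\omega_g$ to break the entanglement between these two families of unknowns that otherwise prevents progress once $g$ is no longer given.

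First, I would repeat the initial steps of the proofs of Proposition~\ref{prop:general_problem} and of Theorem~\ref{thm3}: the principal symbol $b_1 = -\lVert\xi'\rVert$ recovers $g^{\alpha\beta}|_{\partial M}$; the explicit form of $\Lambda^1_\zeta$ in the $s$-gauge, with its boundary factor $e^{-V}$, yields $V|_{\partial M}$ up to the expected additive constant; and the $j=0$ equation~\eqref{eq:Order0}, examined in the $\sigma$-gauge (where $b_0$ is independent of $V$), produces $\partial_r g^{\alpha\beta}|_{\partial M}$ exactly as in the original Lee--Uhlmann computation. I would then set up an induction on $k\geq 1$, whose hypothesis is that $\partial_r^m g_{\alpha\beta}|_{\partial M}$ and (modulo the prescribed initial $V$-derivative) $\partial_r^m V|_{\partial M}$ are known for every $m\leq k$, with the aim of extracting $\partial_r^{k+1} g_{\alpha\beta}|_{\partial M}$ and $\partial_r^{k+1} V|_{\partial M}$ from the symbol equation~\eqref{eq:recursivegeneric} (or~\eqref{eq:Order-1} when $k=1$), written throughout in the $\sigma$-gauge.

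The structural observation driving the argument is that, at step $j=-k$, the new metric unknown $\partial_r^{k+1} g^{\alpha\beta}$ enters $b_{-k}$ only through $\partial_r b_{-k+1}$, where it multiplies a pure quadratic form $\xi_\alpha\xi_\beta$ carrying a definite negative power of $\lVert\xi'\rVert$, whereas the new weight unknown $\partial_r^{k+1} V$ enters only via $q_0$ and contributes a scalar piece that is independent of the angular variable $\omega=\xi'/\lVert\xi'\rVert$. Extracting the quadratic-in-$\omega$ component of $b_{-k}$ therefore isolates the traceless part of $\partial_r^{k+1} g^{\alpha\beta}$, while the scalar component pools the trace $g_{\alpha\beta}\partial_r^{k+1} g^{\alpha\beta}$ with the new $V$-derivative. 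This is precisely where the hypothesis on $\omega_g$ intervenes: iterating $\partial_r \ln\delta = -g_{\alpha\beta}\partial_r g^{\alpha\beta}$ expresses $g_{\alpha\beta}\partial_r^{k+1} g^{\alpha\beta}|_{\partial M}$ in terms of $\partial_r^{k+1}\ln\delta|_{\partial M}$ (known from $\omega_g$) together with already-determined lower-order data, which fixes the trace and hence, combined with the $b_{-k}$ reading, gives $\partial_r^{k+1} g^{\alpha\beta}|_{\partial M}$ completely; the residual scalar equation then involves only $\partial_r^{k+1} V$.

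For $k\geq 2$ this residual equation is linear in $\partial_r^{k+1} V$, in accordance with the leading-$V$ structure $b_j = \tfrac{1}{2}(-2b_1)^j \partial_r^{-j+1} V + R_j$ already established in the proof of Theorem~\ref{thm3}, and $\partial_r^{k+1} V$ is uniquely determined. At $k=1$ the $\tfrac{1}{4}(\partial_r V)^2$ term in $U$ makes the scalar equation quadratic in $\partial_r V$ and linear in $\partial_r^2 V$, reproducing precisely the dichotomy stated in the theorem. The main obstacle I anticipate is the combinatorial bookkeeping required to show that the $\xi'$-separation is clean at every step, i.e.\ that no higher-order unknown leaks either into the quadratic-in-$\xi'$ component (where $\partial_r^{k+1} g^{\alpha\beta}$ is read off) or into the scalar component (where $\partial_r^{k+1} V$ together with the trace of $\partial_r^{k+1} g^{\alpha\beta}$ live), beyond quantities already controlled by the induction hypothesis; since the recursion~\eqref{eq:recursivegeneric} introduces new radial derivatives only via $\partial_r b_{-k+1}$ and via $q_0$, a careful degree count in $\xi'$ should make this manageable.
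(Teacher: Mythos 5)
Your proposal is correct and follows essentially the same route as the paper's proof: work in the gauge where the connection form vanishes, use the knowledge of $\delta$ coming from $\omega_g$ to fix the trace of every radial derivative of $g^{\alpha\beta}$, and at each step of the symbol recursion separate the quadratic-form contribution of the new metric derivative from the scalar ($g^{\alpha\beta}$-proportional) contribution of the new $V$-derivatives, the step $j=-1$ yielding exactly the quadratic dichotomy in $\partial_r V$. The only slight imprecision — for $j\le -2$ the new $V$-derivative enters through $\partial_r b_{j+1}$ rather than through $q_0$, which no longer appears in~\eqref{eq:recursivegeneric} — does not affect the argument, and your level of detail on the inductive bookkeeping matches that of the paper.
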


\begin{proof}
We have already established in Proposition~\ref{prop:general_problem} that without any additional hypotheses we can always determine the boundary value and the first radial derivative of $g^{\alpha\beta}$. We resume the proof at this stage with our new assumption; observing that in boundary normal coordinates the volume form is: \[\omega_g = \sqrt{\delta}\,dr\wedge dy^1\dots\wedge dy^{n-1},\]
we can conclude that the function $\delta$ is now known.
Additionally, since:
\[ \partial_r\ln \delta = g^{\alpha\beta}\partial_r g_{\alpha\beta}=-h^{\alpha\beta}g_{\alpha\beta}=-h\,,\]
the trace of any of the radial derivatives of $g$ is also given from the outset.

We recall that we can work on one of two distinguished gauges and the computations will lead to the same results. Here, we resume our computations in the gauge $\sigma$ introduced in the proof of Proposition~\ref{prop:general_problem}, for which the connection form vanishes.
Previously, the identity: \[2b_{-1}b_1=U+\frac{1}{4\lVert \xi'\rVert^2}\partial_rk^{\alpha\beta}\xi_\alpha\xi_\beta +R\,,\]
could not be solved to determine all of the three unknowns, i.e. the boundary values of $\partial_r k^{\alpha\beta}, \partial_rV$ and $\partial_rV^2$. However, here, let us rewrite this equation as follows.
\[\begin{aligned}2b_{-1}b_1&=-\frac{1}{2\sqrt{\delta}}\partial_r(\sqrt{\delta}\partial_r V)+ \frac{1}{4}(\partial_r V)^2+ \frac{1}{4\lVert \xi'\rVert^2}\partial_rk^{\alpha\beta}\xi_\alpha\xi_\beta +R\,,
\\&=  \frac{1}{4\lVert \xi'\rVert^2}\left(\partial_rk^{\alpha\beta} + ((\partial_r V)^2-2\partial^2_rV)g^{\alpha\beta}
\right)\xi_\alpha\xi_\beta + R\,.\end{aligned}\]
We point out at that between each equation the remainder term $R$ has changed: we have moved all the known terms into it, in particular, all terms involving only $\delta$ and its derivatives.

It follows that one can determine the quadratic form:
\[ \partial_rk^{\alpha\beta} + ((\partial_r V)^2-2\partial^2_rV)g^{\alpha\beta}\,.\]
Taking the trace and exploiting the fact that $g_{\alpha\beta}\partial_rk^{\alpha\beta}$ is now known, we obtain an equation with only two unknowns: $\partial_rV$ and $\partial^2_rV$.

In the problem of determining the Taylor coefficients of $V$, we recover the dichotomy of Theorem~\ref{thm3}. On the other hand, in that of determining those of $g$, we can now solve for $\partial_rk^{\alpha\beta}$.

Repeating this argument inductively, we can determine all of the radial Taylor coefficients of $g^{\alpha\beta}$ and given a solution to the coupled equation for the first and second derivative of $V$, we can obtain all its radial derivatives from order $3$ onwards. 

\end{proof}

\section{The uniqueness of $V$ when $-\Delta_{\mathcal E}$ acts on $C^{\infty}(\Mbar)$}
In the gauge-theoretic context considered in Section~\ref{Section2}, we were not able to answer positively the question of uniqueness of $V$ in the inverse problem for which the metric $g$ is given: the information contained in the first radial derivative of $V$ along the boundary $\partial M$ was missing in the symbol of the DN map defined in Eq.~\eqref{eq:DNMapGaugeNatural}. In the current section, we revisit the same inverse problem taking instead the point of view in which $-\Delta_{\mathcal E}$ acts on $C^{\infty}(\Mbar)$, as opposed to sections of the bundle $\mathscr{G}[1]$. In contrast, we shall see that the symbol of the corresponding DN map given by Eq.~\eqref{eq:DNmap1} determines uniquely all the radial Taylor coefficients of the weight $V$ along the boundary $\partial M$.

We perform in Section~\ref{Newfactorisation} the necessary adaptations to the factorisation of $-\Delta_{\mathcal E}$, taking into account the different definition of the DN map, see Eq.~\eqref{eq:DNmap1}. Then, in Section~\ref{3.2}, we give the proof of Theorem~\ref{thm2}.

\subsection{Factorisation}\label{Newfactorisation}
When the DN map is defined by Eq.~\eqref{eq:DNmap1}, the following factorisation is natural: 
\begin{equation}\label{Eq:22} 
-\Delta_g+g(\textrm{d}V,\textrm{d})\sim (D_r+i \tilde{E} - iC)(D_r+iC)=D_r^2 +i\tilde{E}D_r+i[D_r,C]-\tilde{E}C+C^2\,,
\end{equation}
There is a manifest structural difference with the previous case in that the dependence on $V$ is now contained in the coefficient $\tilde{E}$ which is given by:
\[\tilde{E}=-\frac{1}{2}\partial_r\ln \delta + \partial_r V.\]
To determine $C$, we use that:
\begin{equation}\label{relations}
\begin{aligned} -\Delta_g + g(\textrm{d}V ,d) &= \frac{1}{\sqrt{\delta}}\,D_i(\sqrt{\delta}\,g^{ij})D_j +ig^{ij}\partial_i V D_j\\&=
D_r^2+ (-\frac{1}{2}\partial_r\ln\delta +\partial_rV)iD_r +\frac{1}{\sqrt{\delta}}\,D_\alpha(\sqrt{\delta}\,g^{\alpha\beta}D_\beta) +ig^{\alpha\beta}\partial_\alpha V D_\beta\\&= D_r^2+ i\tilde{E}D_r +Q\,, \end{aligned}\end{equation}
with: \[ Q=\frac{1}{\sqrt{\delta}}\,D_\alpha(\sqrt{\delta}\,g^{\alpha\beta}D_\beta) +ig^{\alpha\beta}\partial_\alpha V D_\beta,\]
which leads to the following:
\[i[D_r,C]+C^2-\tilde{E}C-Q\sim 0.\]

Assuming the symbol $c$ of $C$ satisfies $\displaystyle c\sim \sum_{j\leq 1} c_j(x, \xi')$, where $\xi'=(\xi_\alpha)$ are coordinates in the fibre directions on  $T\partial M$, we deduce the relations:
\begin{equation}\label{recurrence_c} \begin{cases} c_1^2 = g^{\alpha\beta}\xi_\alpha\xi_\beta\,, \\ \partial_r c_1 +2c_0c_1 +\displaystyle \sum_{\alpha} \partial_{\xi_\alpha}c_1 D_\alpha c_1 -\tilde{E}c_1+(\frac{1}{\sqrt{\delta}}\,\partial_\alpha(\sqrt{\delta} \,g^{\alpha\beta})-g^{\alpha\beta}\partial_\alpha V)\xi_\beta=0\,,\\ 2c_{j-1}c_1 + \partial_r c_j -\tilde{E}c_j + \displaystyle \sum_{\substack{0\leq|K|\leq m-j+1\\ j-1\leq m \leq 1\\ (|K|,m)\notin\{ (0,j-1), (0,1)\}}} \frac{1}{K!}\partial^K_{\xi'}c_m D^K_{x'}c_{j+|K|-m}=0\,,&j\leq 0\,. \end{cases} \end{equation}
As before, each of the $c_j(x,\xi')$ is an expression that is positive homogeneous of degree $j$ in $\xi'$ (as can be seen by an immediate induction argument), hence $ \sum_{j\leq 1} c_j$ defines a classical (formal) symbol that determines a unique pseudo-differential operator up to smoothing operators, justifying the existence of the factorisation in the sense of pseudo-differential operators. 

As in Section~\ref{sec:gauge_computations}, the principal symbol is chosen to be the negative square root $c_1=-\sqrt{g^{\alpha\beta}\xi_\alpha\xi_\beta}$ and the factorisation is exploited to relate the symbol of the DN map $\Lambda^0$ to that of $C$ along the boundary $\partial M$ by: 
\begin{lemm}
Let $u\in H^1(M)$ be a solution of Eq.~\eqref{eq:DirichletPbDirect}, $u|_{\partial M}=u_0\in H^{1/2}(\partial M)$, then: \[D_r u|_{\partial M} = -iC(x,D_{y})u|_{\partial M} + R\,u_0\,,\] for a smoothing operator $R$ on $\partial M$.
\end{lemm}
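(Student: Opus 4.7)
The plan is to invoke the factorisation established in Section~\ref{Newfactorisation}:
\[ -\Delta_g + g(dV,\cdot) = L_- L_+ + T, \qquad L_+ := D_r + iC, \quad L_- := D_r + i\tilde E - iC, \]
where $T$ is a family of smoothing operators on $\partial M$ depending smoothly on $r$. Since $u$ satisfies $(-\Delta_g + g(dV,\cdot))u = 0$ on $M$, setting $w := L_+ u$ yields $L_- w \in C^\infty$ on the collar $\mathscr{C} \simeq [0,a)\times \partial M$. The goal is to establish $w\lvert_{\partial M} = R u_0$ for a smoothing operator $R$, from which the lemma follows immediately upon rearranging the identity $w\lvert_{\partial M} = (D_r u + iCu)\lvert_{\partial M}$.

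The key microlocal input is the choice of normalisation $c_1 = -\lVert \xi' \rVert$, which makes $L_+$ the \emph{forward} and $L_-$ the \emph{backward} factor in the sense of \cite[Chapter III]{Treves_1980}: the principal symbol $\tau - i\lVert \xi' \rVert$ of $L_+$ has strictly negative imaginary part on the real characteristic cone, so at the level of evolution equations in $r$, solutions of $L_+ v = 0$ are exponentially damped in high tangential frequencies as $r$ increases, whereas those of $L_- v = 0$ are amplified. Consequently $L_-$ admits a one-sided parametrix $G_-$ on the half-space $r \geq 0$, whose construction by successive approximation is precisely the content of Theorem 1.1 of Chapter III of \cite{Treves_1980}.

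The argument now follows the template of the corresponding lemma in \cite[\S 1]{Lee_Uhlmann_1989}. The assumptions $u \in H^1(M)$ and $C$ of order $1$ show that $w$ is of finite Sobolev order on the collar. On the other hand, the backward parametrix for $L_-$ writes $w$ as the backward propagation of its trace $w\lvert_{r=0}$ plus a smooth contribution coming from $L_- w \in C^\infty$; since this backward propagation amplifies any non-smoothing component of $w\lvert_{r=0}$ exponentially in tangential frequency, compatibility with the finite Sobolev regularity of $w$ in the interior of the collar forces $w\lvert_{r=0}$ to differ from the action of a smoothing operator on $u_0$ only by a smooth function. This gives the desired identity $w\lvert_{r=0} = R u_0$ modulo $C^\infty(\partial M)$.

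The only point requiring attention beyond a verbatim adaptation of \cite{Lee_Uhlmann_1989} and \cite{Valero_2024} --- and the one I expect to be the main (albeit minor) obstacle --- is checking that the additional contribution $\partial_r V$ present in $\tilde E$ does not disturb the parametrix construction. This is straightforward: $\tilde E$ is of order zero along $\partial M$ and enters $L_-$ only through the lower-order term $i\tilde E D_r$, which leaves the principal symbol $\tau + i\lVert \xi' \rVert$ of $L_-$ at the characteristic variety unchanged. Treves's hypotheses therefore remain satisfied, and the remainder of the proof is formally identical to the Laplace--Beltrami and gauge-Laplacian cases treated in the cited references.
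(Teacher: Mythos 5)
Your proposal is correct and follows essentially the same route as the proof the paper has in mind (and omits with a citation to Lee--Uhlmann and Valero): factor the operator, set $w=(D_r+iC)u$, and use the one-sided (Treves, Chapter III, Theorem 1.1) parametrix for the amplifying factor $D_r+i\tilde{E}-iC$, made applicable by the sign choice $c_1=-\lVert\xi'\rVert$, to conclude that $w|_{\partial M}=Ru_0$ modulo smoothing. Your closing observation is exactly the point the paper relies on: the extra term $\partial_r V$ only enters through the zeroth-order coefficient $\tilde{E}$, so the principal symbols and hence the parametrix construction are unchanged from the Laplace--Beltrami case.
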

Once more, the proof of this lemma is omitted as it follows very closely the arguments given in~ \cite{Lee_Uhlmann_1989} for the Laplace-Beltrami operators and in \cite{Valero_2024} for gauge Laplacians on spinors.

\subsection{Discussion and proof of Theorem~\ref{thm2}}\label{3.2}
Before discussing the proof of Theorem~\ref{thm2}, we shall briefly consider the more general inverse problem in which we wish to determine the couple $(g,V)$. It turns out that the situation is worse than in the gauge-theoretic setting: even their boundary values cannot be determined.

To see this, recall that the appropriate DN map $\Lambda^0$ is defined by Eq.~\eqref{eq:DNmap1} and, in boundary normal coordinates, can be expressed along the boundary as:
\[\Lambda^0 u_0 = \partial_r u|_{\partial M}e^{-V}\sqrt{\delta}dy^1\wedge\dots\wedge dy^{n-1}\sim C(x,D_y)u|_{\partial M}e^{-V}\sqrt{\delta}dy^1\wedge\dots\wedge dy^{n-1}, \]
where $u$ solves Eq.~\eqref{eq:DirichletPbDirect}.

Here, squaring the principal symbol of $\Lambda^0$ will enable us to determine $g^{\alpha\beta}\sqrt{\delta}e^{-V}$.
However, the determinant of this now involves both $\delta$ and $e^{-V}$ and cannot be used to determine either of them.

We now return to the main focus of this section: the inverse problem in which $g$ is known.

\begin{proof}[Proof of Theorem 2]
The factorisation~\eqref{Eq:22} shows that, up to smoothing operators we have:
\[\Lambda^0 \sim (C e^{-V}\delta^{\frac{1}{2}})|_{\partial M} dy^1\wedge \dots \wedge dy^n.\]

Since the principal symbol $c_1$ of $C$ is given by $-\sqrt{g^{\alpha\beta}\xi_\alpha\xi_\beta}$, hence known from the outset, we can immediately determine the boundary value of the weight $V$. It then follows that we can determine the boundary value of any of the coefficients $c_i(x,\xi')$, of the formal symbol.

For the next step, we begin by rewriting the second equation in Eq.~\eqref{recurrence_c}, using $\tilde{E}= -\frac{1}{2}\partial_r\ln\delta +\partial_rV$,  as follows:
\begin{equation}\label{eq:c0}\begin{aligned} 2c_0c_1 -\frac{1}{2||\xi'||}(h^{\alpha\beta}-(h+2\partial_r V)g^{\alpha\beta})\xi_\alpha\xi_\beta &+\displaystyle \sum_{\alpha} \partial_{\xi_\alpha}c_1 D_\alpha c_1 \\ &+(\frac{1}{\sqrt{\delta}}\,\partial_\alpha(\sqrt{\delta}\, g^{\alpha\beta})-g^{\alpha\beta}\partial_\alpha V)\xi_\beta=0.\end{aligned}\end{equation}
Once more, we we have adopted the notations introduced in~\cite{Lee_Uhlmann_1989}: 
 \[h^{\alpha\beta}=\partial_r g^{\alpha\beta}\,,\qquad h=h^{\alpha\beta}g_{\alpha\beta}\,,\qquad ||\xi'||=\sqrt{g^{\alpha\beta}\xi_\alpha\xi_\beta}=-c_1\,.\]
 
Rearranging the terms in Eq.~\eqref{eq:c0} to isolate $\partial_r V$, we arrive at:
\[c_0 = \frac{1}{2}\partial_rV + R(g, T_0(V))\,,\]
where $R$ only depends on $g$, the boundary value of $V$ and its tangential derivatives.

We claim that when $j\leq 0$:
\[ c_j =(-1)^j(2c_1)^{j+1}\partial_r^{-j+1}V + \tilde{R}_j(g,T_j(V))\,,\]
where, this time, the remainder depends on $g$ the transverse derivatives of $V$ up to order $-j$ and their tangential derivatives.
Indeed, if it holds up to some $c_j$, then using the recurrence relation~\eqref{recurrence_c}, we can rearrange the terms (isolating this time the normal derivative $\partial_r c_j$) so that:
\[ c_{j-1}=-\frac{1}{2c_1}\partial_r c_j+R(g,T_j(V))\,, \]
for some remainder $R$ that contains only transverse derivatives of $V$ up to order $-j+1$, the result then follows by derivation of $c_j$.
Hence all the radial derivatives of $V$ can be obtained iteratively from the boundary values of the formal symbol of $C$.
\end{proof}

As mentioned in the introduction, we can improve the above result in the following manner:
\begin{theo}\label{thm5}
Suppose that the Riemannian volume form $\omega_g$ is known in a neighbourhood of the boundary. Then, along the boundary, the radial Taylor coefficients in boundary normal coordinates of both the potential $V$ and the metric $g$ are uniquely determined by the Dirichlet-to-Neumann map $\Lambda^0$.
\end{theo}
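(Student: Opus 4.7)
The plan is to combine the strategies of Theorems~\ref{thm2} and~\ref{thm4} applied to the factorisation \eqref{Eq:22} and its symbol recurrence \eqref{recurrence_c}. The key structural features are that in the $\Lambda^0$ setting, $\tilde E$ contains $\partial_r V$ linearly (so no polynomial ambiguity arises as in Theorem~\ref{thm4}), while the hypothesis that $\omega_g$ is known in a collar neighbourhood of $\partial M$ supplies $\delta$ and all its radial derivatives, hence in particular the iterated radial derivatives of $h=-\partial_r\ln\delta$.

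I would first extract $V|_{\partial M}$ and $g^{\alpha\beta}|_{\partial M}$ from the principal symbol. Since $\Lambda^0\sim C\,e^{-V}\sqrt{\delta}\,dy^1\wedge\cdots\wedge dy^{n-1}$, dividing the principal symbol by the known $\sqrt{\delta}$ and squaring produces $g^{\alpha\beta}\xi_\alpha\xi_\beta\cdot e^{-2V}$; its determinant is $\delta^{-1}e^{-2(n-1)V}$, which isolates $V|_{\partial M}$ and then $g^{\alpha\beta}|_{\partial M}$. Turning to the equation \eqref{eq:c0} for $c_0$ and setting $K^{\alpha\beta}:=h^{\alpha\beta}-(h+2\partial_r V)g^{\alpha\beta}$, I rewrite it as
\begin{equation*}
c_0 = -\frac{K^{\alpha\beta}\xi_\alpha\xi_\beta}{4\|\xi'\|^2}+R,
\end{equation*}
where $R$ depends only on already-known data. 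Multiplying $c_0-R$ by $\|\xi'\|^2$ recovers the full symmetric tensor $K^{\alpha\beta}$; contracting with $g_{\alpha\beta}|_{\partial M}$ yields the scalar equation $g_{\alpha\beta}K^{\alpha\beta}=-(n-2)h-2(n-1)\partial_r V$, which uniquely determines $\partial_r V$ (since $h$ is known) and therefore $h^{\alpha\beta}=K^{\alpha\beta}+(h+2\partial_r V)g^{\alpha\beta}$ as well.

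For the induction, assume that for some $k\geq 1$ the boundary values of $\partial_r^\ell V$ and $\partial_r^\ell g^{\alpha\beta}$ are known for $0\leq\ell\leq k$; then every symbol coefficient $c_m$ with $m\geq 1-k$ is computable. The recurrence \eqref{recurrence_c} at step $j=1-k$ gives
\begin{equation*}
c_{-k}=-\frac{1}{2c_1}\partial_r c_{1-k}+R,
\end{equation*}
with $R$ depending only on already-determined quantities. The only terms in which $\partial_r^{k+1}V$ and $\partial_r^{k+1}g^{\alpha\beta}$ appear are those produced by $\partial_r c_{1-k}$, and a straightforward induction shows that they enter linearly. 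Clearing an appropriate power of $\|\xi'\|^2$, one obtains a symmetric tensor equation of the form $\partial_r^{k+1}g^{\alpha\beta}+\alpha_k\,\partial_r^{k+1}V\cdot g^{\alpha\beta}=(\text{known})$ with $\alpha_k\neq 0$; contracting with $g_{\alpha\beta}$ and using the knowledge of $\partial_r^{k+1}\ln\delta$ yields a scalar linear equation for $\partial_r^{k+1}V$. Substituting back into the tensor equation recovers $\partial_r^{k+1}g^{\alpha\beta}$, closing the induction.

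The main obstacle is the bookkeeping that verifies, at each inductive step, that $\partial_r^{k+1}V$ and $\partial_r^{k+1}g^{\alpha\beta}$ enter $c_{-k}$ linearly with the asserted trace / trace-free decoupling, that the scalar coefficient $\alpha_k$ is nonzero, and that every term of lower radial order is fully absorbed into $R$. This amounts to a careful extension of the symbol computations already carried out for Theorems~\ref{thm2} and~\ref{thm4}; no new analytic ingredient beyond those is required.
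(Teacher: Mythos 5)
Your proposal is correct and follows essentially the same route as the paper's proof: recover $V|_{\partial M}$ and $g^{\alpha\beta}|_{\partial M}$ from the principal symbol using the known $\delta$, then use the $c_0$ equation with $\tilde k^{\alpha\beta}=h^{\alpha\beta}-(h+2\partial_r V)g^{\alpha\beta}$ and its trace (with $h$ known from $\partial_r\ln\delta$) to get $\partial_r V$ and $h^{\alpha\beta}$, and iterate through the recurrence \eqref{recurrence_c}. Your spelled-out inductive step, with the top-order derivatives entering linearly through $\partial_r c_{1-k}$ and the trace being controlled by $\partial_r^{k+1}\ln\delta$, is just a more explicit version of the induction the paper invokes in one line.
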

\begin{proof}
In boundary normal coordinates the volume form is: \[\omega_g = \sqrt{\delta}\,dr\wedge dy^1\dots\wedge dy^{n-1}\,,\]
thus, the additional assumption translates to the knowledge of the function $\delta$. Moreover:
\[ \partial_r\ln \delta = g^{\alpha\beta}\partial_r g_{\alpha\beta}=-h^{\alpha\beta}g_{\alpha\beta}=-h\,,\]
where we have used the same notations as above. This shows that $h$ and all of its derivatives are in fact known from the outset.

Since $\delta$ is known, the obstruction to determining the boundary values of $g_{\alpha\beta}$ and $e^{-V}$ from the principal symbol of the DN map that we highlighted at the beginning of the section has been lifted: we can now determine the boundary value $e^{-V}$ from the determinant of the square of the principal symbol $q(\xi)=\delta \,e^{-2V} g^{\alpha\beta}\xi_\alpha\xi_\beta$, and then recover the boundary value of $g^{\alpha\beta}$. As before, this then can be used to determine the boundary values of all the coefficients $c_j(x,\xi')$ in the formal expansion of $C$.

Equation~\eqref{eq:c0} can now be exploited to determine $h^{\alpha\beta}$ and $\partial_r V$. Indeed, rearranging the terms we obtain:
\[ c_0= -\frac{1}{4\lVert\xi'\rVert^2}\tilde{k}^{\alpha\beta}\xi_{\alpha}\xi_{\beta} + R\,,\]
where the remainder only depends on the boundary values of $g$ and $V$ and/or their tangential derivatives and:
\[ \tilde{k}^{\alpha\beta}=h^{\alpha\beta}-(h+2\partial_r V)g^{\alpha\beta}.\]
Hence, $\tilde{k}^{\alpha\beta}$ can be determined and since $h$ is known, its trace $\tilde{k}=k^{\alpha\beta}g_{\alpha\beta}$ can be used to find: 
\[ \partial_r V = -\frac{\tilde{k}+(n-2)h}{2(n-1)}\,. \]
As before, the relationship between $\tilde{k}^{\alpha\beta}$ and $h^{\alpha\beta}$ can be reversed:
\[ h^{\alpha\beta} = \tilde{k}^{\alpha\beta} + \left(h -\frac{\tilde{k}+(n-2)h}{2(n-1)}\right)g^{\alpha\beta}, \]
and we can compute $h^{\alpha\beta}$.
The argument can be repeated inductively to obtain the boundary values of all of the radial derivatives.
\end{proof}

\subsection*{Acknowledgements}
Research supported by NSERC grant RGPIN 105490-2018. We thank Thierry Daud\'e for helpful comments and suggestions.
\nocite{*}
\printbibliography[title=Bibliography]

 \end{document}